\newtheorem{theorem}{Theorem}[section]
\newtheorem{fact}[theorem]{Fact}
\newtheorem{lemma}[theorem]{Lemma}
\newtheorem{corollary}[theorem]{Corollary}
\newtheorem{proposition}[theorem]{Proposition}
\theoremstyle{definition}
\newtheorem*{notation}{Notation}
\newtheorem{definition}[theorem]{Definition}
\newenvironment{example}
  {\pushQED{\qed}\examplex}{\popQED\endexamplex}
\newenvironment{remark}
  {\pushQED{\qed}\remarkx}{\popQED\endremarkx}
\newcommand{\cons}{\operatorname{\vdash}}
\newcommand{\conss}{\operatorname{\Vdash}}
\newcommand{\Eq}{\mb{Eq}}
\newcommand{\Seq}{\mb{Seq}}
\newcommand{\eff}{\Leftrightarrow}
\newcommand{\ef}{\text{ iff }}
\newcommand{\mc}[1]{\mathcal{#1}}
\newcommand{\mb}[1]{\mathbf{#1}}
\newcommand{\mbb}[1]{\mathbb{#1}}
\newcommand{\ms}[1]{\mathsf{#1}}
\newcommand{\mr}[1]{\mathrm{#1}}
\newcommand{\Set}{\mathbf{Set}}
\newcommand{\Poly}{\mb{Poly}}
\newcommand{\Mon}{\mb{Mnd}}
\newcommand{\SupL}{\mb{SupL}}
\newcommand{\CompL}{\mb{CompL}}
\newcommand{\catC}{\mc C}
\newcommand{\catD}{\mc D}
\newcommand{\SAlg}{\Sigma\text{-}\mr{Alg}}
\newcommand{\Alg}[1]{#1\text{-}\mr{Alg}}
\newcommand{\fml}{\mr{Fml}}
\newcommand{\Quo}{\mb{Quo}}
\newcommand{\MnF}{\mb{MnF}}
\newcommand{\set}[1]{\{#1\}}
\newcommand{\scomp}[2]{\operatorname{\{}#1 \mid #2\operatorname{\}}}
\newcommand{\op}{^{\operatorname{op}}}
\newcommand{\surj}{\twoheadrightarrow}
\newcommand{\id}{\operatorname{id}}
\newcommand{\pair}[1]{\langle #1 \rangle}
\newcommand{\nt}{\Rightarrow}
\newcommand{\gb}{\rightleftarrows}
\newcommand{\ass}[1]{\llbracket#1\rrbracket} 
\newcommand{\ov}[1]{\overline{#1}}
\newcommand{\qsi}[1]{\widetilde{#1}}
\newcommand{\inv}{^{-1}}
\newcommand{\power}[1]{\mb P(#1)}
\newcommand{\cps}{\fatsemi} 
\newcommand{\hook}{\hookrightarrow}
\newcommand{\ar}{\ms{ar}}
\newcommand{\yon}{\ms y}
\DeclareRobustCommand{\dashV}{\text{\reflectbox{$\Vdash$}}}
\DeclareRobustCommand{\dashv}{\text{\reflectbox{$\vdash$}}}
\newcommand{\cceff}{\operatorname{\dashV\!\Vdash}}
\newcommand{\ceff}{\operatorname{\dashv\!\vdash}}
\newcommand{\sqt}{\Mapsto}
\newcommand{\modall}{\operatorname{\triangledown}}
 \newsavebox{\possibility}
 \sbox\possibility{%
 \begin{tikzpicture}
 \draw (0,0ex) -- (-0.6ex,0.6ex);%
 \draw (0,0ex) -- (0.6ex,0.6ex);%
 \draw (-0.6ex,0.6ex) -- (0ex,1.2ex);%
 \draw (0.6ex,0.6ex) -- (0ex,1.2ex);%
 \end{tikzpicture}}
\begin{document}

\title{Syntax and Consequence Relation --- A Categorical Perspective}
\author{Lingyuan Ye}

\date{\today}

\address{Tsinghua University}
\email{\textrm{ye.lingyuan.ac@gmail.com}}

\maketitle


\section{Introduction}
\label{sec:intro}
On the proof-theoretic side, logic, roughly speaking, is mainly about the grammar of the language (syntax), and reasoning on this language (consequence relations). On the model-theoretic side, we further provides mathematical structures that evaluates the language (semantic models).

Among these, syntax is perhaps the easiest part. What one usually does to specify the syntax is to first fix a set of variables $X$, which almost always is chosen to be a countably infinite set, and then define the set of well-formed formulas $\fml$ with variables being in $X$. Here in this paper we will confine ourselves to only consider language of algebraic nature. This means that our signature for the language would be algebraic, and the only formula-forming rules would be application of function symbols. Syntax in richer context with variable bindings could become much less trivial.\footnote{See \cite{halbachleigh2021} for example for an extensive study of a very rich syntax theory in a first-order setting.}

The more interesting part of logic in our setting is how to reason with the given language, and how we provide the semantics. For the proof-theoretic side, the inferential structure on the specified language is formally described by \emph{consequence relation}. There are different forms of consequence relations appearing in the literature, depending on your specific proof systems, but almost all the examples can be generally viewed as some \emph{binary relations} between the sets $\fml$, $\power{\fml}$, $\power{\fml \times \fml}$, or possibly some other sets constructed from the syntactical data. Here $\power Y$ denotes the power set of $Y$. In every concrete scenario, a consequence relation is always recursively generated from a set of inference rules, which means that they are consequence relations that possess some \emph{finitary} nature.

Tarski is arguably one of the very first logicians to initiate the abstract study of the structure of general consequence relations. In his paper \cite{tarski1928remarques} he describes consequence relations in terms of \emph{closure operators} on a power set of some set. In further developments of abstract consequence relations, and in the majority of concrete logical systems, the consequence relations are required to be \emph{structural}, i.e. they should be invariant under substitution of variables. Structural consequence relations have been used by Blok and Pigozzi in \cite{blok1989algebraizable} to study the \emph{algebraisation} of logics; various refined treatment and generalisations of this work has been provided by Block and J\'onsson \cite{blok2006equivalence}, and by Galatos and Tsinakis \cite{galatos&tsinakis2009consequence}. They provides natural links from the syntactic side to the semantic side. 

For usual logics of algebraic signature, including propositional logics and all kinds of modal logics, a semantic model can be viewed abstractly as a structure that provides \emph{valuation} of syntactic data. A truth table or a modal evaluation frame are structures that determine whether a propositional or modal formula is \emph{true} or \emph{false}, or has value 1 or 0, or even not necessarily 2-valued. The recursive nature of the the usual truth clauses when defining such a valuation in particular shows furthermore that the valuation function should \emph{preserve tha algebraic structure} of the syntax, i.e. they should be an algebraic homomorphism with respect to the signature of the syntax.

All of these aspects will be studied in this paper, but with a complete reformulation and conceptual generalisation using the language of category theory. The reasons we feel such an approach is needed are manyfold. Though the study of modern logic has proven to be quite successful, the current framework described above is somewhat frustrating in several aspects. This starts from the very beginning of the way of describing the syntax. There are no good criteria \emph{a priori} that determines the choice of the fixed set of variables.\footnote{I should comment here that the truth of this sentence should be conditional, depending on one's perspective of what logic is. In more philosophical or metamathematical contexts, especially when certain logic systems are used to provide the foundation of mathematics, we cannot build the logic system with very strong mathematical axioms. There the choice of the set of variables, though not completely determined, are at least confined by our epistemic and computational ability. Here is this paper we will never consider such a foundational view point towards logic. What we will give is a \emph{mathematical study} of logic, where the mathematics we use are possible grounded by some other (meta)logical systems serving as the foundation.} There is indeed, I believe, a common feeling among experts that such a choice is not particularly important, but there is no explicit explanation or systematic formulation of why the obtained results are independent from such a choice. Fixing a set of variables also requires extra work when we further develop the formulation of consequence relations. The description of structurality, or invariance under substitution of variables, is yet another level of complexity we have to add into our framework. In \cite{galatos&tsinakis2009consequence} for example, where they provide a more general framework of consequence relations, structurality is modelled using the language of modules over a residuated lattices and invariance under its actions. Furthermore, it seems to the author that in the literature there lacks sufficient formulation of general principles in defining the semantics.

As indicated in the title, our approach towards logic, even in the basic level of describing the syntax, is \emph{categorical}, or \emph{functorial}, which very nicely fixes the fore mentioned drawbacks. We have two main conceptual advantage in taking up this approach. On the pure syntactic level, a functorial formuation fixes the fore mentioned drawbacks of a unnatural choice of a fixed set of variables. Functoriality means that we are going to describe the construction of syntax as a \emph{functor} $\mb F$ that outputs the set of well-formed formulas $\mb FX$ as the underlying set of variables $X$ varies. This is a very natural move from the author's perspective, since the usual way of specifying the syntax of a logic very naturally gives us a functor. This functorial approach towards the syntax is studied in Section~\ref{sec:synmonad}.

The more important reason that we provide a functorial formulation of syntax using $\mb F$ is that substitution of variables are automatically described by the action of $\mb F$ on maps. This means that the functor $\mb F$ describes the construction of syntactic data, as well as the substitutional structure, within a single integrated package. We will then apply it to further study abstract consequence relations, where structurality is now built in, and general semantics. There we will state and show our other general perspective towards logic: consequence relations and semantics are general ways to construct \emph{quotients} of structures. The details will be more specifically given in later section. As we will see, our approach provides a very clean and conceptual understanding of the nature of logic (viewed as a branch of mathematics), and extends and generalises various existing results, concerning consequence relations and algebraisation, algebraic semantics, etc., that already established in the literatures, as well as derives several new ones.


This is the second draft of a longer text, which we will progressively release new versions, including additions, revisions and updates. We assume the readers to be familiar with basic categorical notions, including categories, functors, natural transformations, adjunctions, limits and colimits, etc..

\begin{notation}
  To avoid notation clashes, in this paper we will use $\cps$ to denote function (morphisms) composition, and we will reserve the symbol $\circ$ for the monoidal product on the category of endo-functors given by functor composition. We will also often omit it if there's no confusion. And unlike the usual order for function composition, $f \cps g$ will denote post-compose $g$ with $f$, i.e. $f \cps g = g \circ f$ in the usual notation for function composition. For representable functors, we will use $\yon^X$ to denote the Hom-functor $\Set(X,-)$ on $\Set$ (or any other arbitrary category $\catC$). In particular, we will use $\yon \cong \yon^1$ to denote the identify endo-functor, with 1 denotes the singleton set. The functor category from $\catC$ to $\catD$ will be denoted as $[\catC,\catD]$. Given any binary relation $R$ on a set $X$, we use $x R y$ to denote $(x,y) \in R$.
\end{notation}

\section{Syntactic Monad}
\label{sec:synmonad}
In this paper, we will mainly focus on logic over algebraic (propositional) signatures. The syntax over such a signature is quite simple, since it does not involve variable binding. The syntax of such logics can then be studied from a universal algebra point of view, using the language of monads and algebras of monads.

Let a signature $\Sigma$ be a set of connectives, together with a map $\ar : \Sigma \to \omega$ assigning to each connective its arity. We identify a connective $\star$ that has arity zero as a (propositional) constant. For example, in the usual syntax of propositional logic both $\top$ and $\bot$ are propositional constants. The signature here may contain the usual propositional connectives, including $\top,\bot,\wedge,\vee,\neg,\to$ and others, or modal operators $\modall$ of arity larger than zero as well. Hence, such a setting at least encompass all standard (multi-)modal logics. For any set of variables $X$, the set of well-defined formulas ($\Sigma$-terms) over $X$ are usually defined recursively as follows,
\[ \varphi :\equiv x \in X \mid \star(\varphi_1,\cdots,\varphi_{n}) \]
where $\star$ ranges over connectives in $\Sigma$, and $n$ is the arity of $\star$. We denote this set of well-formed formulas as $\mb FX$. If a formula $\varphi$ can be constructed in the above sense, viz. $\varphi\in\mb FX$, then it only contains variables in the set $X$. 

The above description of syntax actually extends to an endo-functor on $\Set$
\[ \mb F : \Set \to \Set. \]
For any function $f : X \to Y$, we think of it as specifying variable substitution, taking $x\in X$ into $f(x)\in Y$. The function $\mb Ff$ then sends each formula $\varphi$ in $\mb FX$ into a formula in $\mb FY$, by \emph{simultaneously substituting} every $x\in X$ in $\varphi$ into $f(x)$. We will also write $\varphi^f$, the substitution of $\varphi$ along $f$, to denote $\mb Ff(\varphi)$. For a subset $\Phi$ of $\mb FX$, we write $\Phi^f$ to denote the set $\scomp{\varphi^f}{\varphi\in\Phi}$ as well. The fact that the functor $\mb F$ acts on morphisms models substitution is very important to our categorical approach of logic.

\begin{remark}
\label{rem:synmonsubst}
  Our functorial approach here subsumes the usual formulation of substitution in the literature. Usually in other literatures, where we only describe the syntax as a set $\fml$ based on a fixed set of variables $X$, substitution is modelled by action of a monoid $M$ on $\fml$. From our functorial perspective, the set of formulas $\fml$ is simply the set $\mb FX$ for the chosen variable set $X$. The monoid that describes substitution is actually isomorphic to the monoid of endo-functions on $X$
  \[ M \cong \Set(X,X). \]
  The monoid structure on $\Set(X,X)$ is given by composition of functions. The functor $\mb F$ then describes the action of this monoid on our set of formulas simply as follows: For any given $\sigma\in\Set(X,X)$ and any $\varphi\in\mb FX$,
  \[ \sigma\cdot\varphi = \mb F\sigma(\varphi) = \varphi^\sigma. \]
  The fact that $\mb F$ is a functor ensures this is a well-defined action of a monoid
  \[ \varphi^{\id_X} = \mb F\id_X(\varphi) = \id_{\mb FX}(\varphi) = \varphi, \]
  \[ \varphi^{\sigma\cps\tau} = (\mb F\sigma \cps \mb F\tau)(\varphi) = (\varphi^\sigma)^\tau. \]
  Hence, the usual formulation of substitution on the syntactical level is subsumed in our functorial description.
\end{remark}

The crucial observation is that $\mb F$ further has a monad structure that, as we will show in Lemma~\ref{lem:fsigalg} below, describes the $\Sigma$-term algebras associated to the syntax. Such a result belongs to a much more general connection between algebras and monads, a description of which can be found in \cite{hyland&poewr2007algandmonad}.\footnote{Be careful that in \cite{hyland&poewr2007algandmonad}, the notion of algebraic theory is slightly different than usual; it is formulated in the representation-invariant categorical notion of theories, viz. Lawvere theories. See the reference there for more information.}

A monad on $\Set$ is a triple $(\mb T,\eta,\mu)$, where $\mb T$ is an endo-functor on $\Set$, and $\eta,\mu$ are natural transformations of the following type,
\[ \eta : \yon \to \mb T, \quad \mu : \mb T\mb T \to \mb T, \]
Recall that we use $\yon$ to denote the identity functor. They are required to make the following diagrammes commute,
\[
\begin{tikzcd}
  \mb T \ar[dr, equal] \ar[r, "\eta \circ \id_\mb T"] & \mb T \mb T \ar[d, "\mu"] & \mb T \ar[l, "\id_\mb T \circ \eta"'] \ar[dl, equal] \\
  & \mb T
\end{tikzcd} \quad
\begin{tikzcd}
  \mb T\mb T\mb T \ar[r, "\mu \circ \id_\mb T"] \ar[d, "\id_\mb T \circ \mu"'] & \mb T\mb T \ar[d, "\mu"] \\
  \mb T\mb T \ar[r, "\mu"'] & \mb T
\end{tikzcd}
\]
The left, right triangle and the square are called the left, right unit law and associativity law, respectively. Equivalently, let $([\Set,\Set],\circ,\yon)$ be the monoidal category of endo-functors on $\Set$ with the monoidal product given by functor composition; then a monad on $\Set$ is precisely an \emph{internal monoid} in this monoidal category.\footnote{Given a monoidal category $(\catC,\otimes,I)$, an internal monoid is an object $x$ equipped with two maps $u : I \to x$, $m : x \otimes x \to x$, which are further required to satisfy the usual unit and associativity laws expressed in equations between composites of functions. In particular, an internal monoid in $(\Set,\times,1)$ is simply a monoid in the usual sense. See \cite[Section III.6]{maclane2013categories} for a very brief account of internal groups, which are internal monoid equipped with an additional inverse map; see \cite{borceux2005internalaction} for a much more detailed discussion of internal algebras in a category.} The above diagrammes precisely express that $\eta$ is a two-sided unit of $\mu$ and $\mu$ is associative. We refer the readers to \cite[Chapter~VI]{maclane2013categories} for a more detailed description.

Explicitly, we have the following morphisms in $[\Set,\Set]$
\[ \eta_\mb F : \yon \to \mb F,\ \mu_\mb F : \mb F \mb F \to \mb F. \]
For any set $X$, we have the component of the unit
\[ \eta_{\mb F,X} : X \to \mb FX. \]
It is given by the inclusion $X \hook \mb F(X)$, since in our description of syntax we have defined that every variable (element) in $X$ is a well-formed formula. To avoid confusion, when we want to consider $x\in X$ as a formula in $\mb FX$ we will always write $\eta_{\mb F,X}(x)$ explicitly. We also have the component of multiplication
\[ \mu_{\mb F,X} : \mb F\mb FX \to \mb FX. \]
The elements in $\mb F\mb FX$ are formulas constructed from the set of variables being the set of formulas $\mb FX$ over $X$, which can be naturally viewed as formulas over $X$ itself. Formally, we can inductively define $\mu_{\mb F,X}$ as follows: Let $\varphi$ ranges over $\mb FX$ and $\psi_j$ ranges over $\mb F\mb FX$, then we have
\begin{align*}
  \mu_{\mb F,X}(\eta_{\mb F,\mb FX}(\varphi)) &:= \varphi, \\
  \mu_{\mb F,X}(\star_i(\psi_1,\cdots,\psi_{n_i})) &:= \star(\mu_{\mb F,X}(\psi_1),\cdots,\mu_{\mb F,X}(\psi_{n})),
\end{align*}
for any $\star$ in $\Sigma$ of arity $n$. It is easy to check the naturality for both $\eta_{\mb F}$ and $\mu_{\mb F}$.

\begin{lemma}
  $(\mb F,\eta_\mb F,\mu_\mb F)$ is a monad.
\end{lemma}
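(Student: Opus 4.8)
The plan is to verify the two unit laws and the associativity law directly, reading each of the three coherence diagrams componentwise at an arbitrary set $X$ and reducing the resulting equalities of functions to structural inductions on $\Sigma$-terms. Naturality of $\eta_\mb F$ and $\mu_\mb F$ has already been recorded above (and in any case $\eta_\mb F$ is natural because $\mb Ff$ restricts to $f$ on variables, while naturality of $\mu_\mb F$ is the same kind of induction on terms), so the only real content is the three laws. Since every structure map in sight---the action of $\mb F$ on maps, the inclusions $\eta_{\mb F,X}$, and the maps $\mu_{\mb F,X}$---is given by recursion on the way a formula is built, each law should drop out of the recursion clauses together with an induction hypothesis.

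The \emph{left unit law} is immediate: whiskering $\eta_\mb F$ with $\mb F$ on the outside gives $\eta_{\mb F,\mb FX}$, so the component at $X$ is the equation $\eta_{\mb F,\mb FX}\cps\mu_{\mb F,X}=\id_{\mb FX}$, i.e.\ $\mu_{\mb F,X}(\eta_{\mb F,\mb FX}(\varphi))=\varphi$ for all $\varphi\in\mb FX$, which is exactly the first defining clause of $\mu_{\mb F,X}$, so no induction is needed. For the \emph{right unit law} the relevant whiskering produces $\mb F\eta_{\mb F,X}$, and the component at $X$ asks that $\mb F\eta_{\mb F,X}\cps\mu_{\mb F,X}=\id_{\mb FX}$; here I would induct on $\varphi\in\mb FX$. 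If $\varphi=\eta_{\mb F,X}(x)$, naturality of $\eta_\mb F$ identifies $\mb F\eta_{\mb F,X}(\varphi)$ with $\eta_{\mb F,\mb FX}(\eta_{\mb F,X}(x))$, and the first clause of $\mu_{\mb F,X}$ returns $\varphi$; if $\varphi=\star(\varphi_1,\dots,\varphi_n)$, then $\mb F\eta_{\mb F,X}$ commutes with $\star$ by the definition of $\mb F$ on maps and $\mu_{\mb F,X}$ commutes with $\star$ by its second clause, so the induction hypothesis applied to each $\varphi_i$ finishes the case.

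For the \emph{associativity law} the component at $X$ is the equality of the two composites $\mu_{\mb F,\mb FX}\cps\mu_{\mb F,X}$ and $\mb F\mu_{\mb F,X}\cps\mu_{\mb F,X}$, both of type $\mb F\mb F\mb FX\to\mb FX$, and I would induct on $\Psi\in\mb F\mb F\mb FX$. If $\Psi=\eta_{\mb F,\mb F\mb FX}(\psi)$ for some $\psi\in\mb F\mb FX$, the first composite collapses by the left unit law applied at the set $\mb FX$ to $\mu_{\mb F,X}(\psi)$, while in the second composite naturality of $\eta_\mb F$ rewrites $\mb F\mu_{\mb F,X}(\Psi)$ as $\eta_{\mb F,\mb FX}(\mu_{\mb F,X}(\psi))$, which $\mu_{\mb F,X}$ then also sends to $\mu_{\mb F,X}(\psi)$. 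If $\Psi=\star(\Psi_1,\dots,\Psi_n)$, both composites propagate through $\star$ (by the second clause of the appropriate $\mu$ and the definition of $\mb F$ on maps), reducing the claim for $\Psi$ to the induction hypothesis for $\Psi_1,\dots,\Psi_n$.

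I do not expect a genuine obstacle: these are the standard inductions exhibiting $\mb F$ as the free-algebra monad for the signature $\Sigma$. The one point that deserves care is the bookkeeping of the variable-inclusion $\eta$ at the three nested levels $\mb FX$, $\mb F\mb FX$, $\mb F\mb F\mb FX$, and invoking naturality of $\eta_\mb F$ at precisely the spot where a substitution map is applied to a variable-formula---this is in particular the only place naturality of $\eta_\mb F$ is really used, namely the base case of the associativity induction. That the three inductions are legitimate, i.e.\ that the recursion defining $\mu_{\mb F,X}$ is exhaustive on $\mb F\mb FX$ and well-founded, is already built into the way $\mb F$ and $\mu_\mb F$ were introduced. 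Alternatively one could simply invoke the general correspondence between algebraic signatures and finitary monads, but the direct verification above is more self-contained.
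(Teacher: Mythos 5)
Your proof is correct. The paper states this lemma without any proof, treating the verification as routine, and your componentwise check of the two unit laws and associativity by structural induction on terms is exactly the standard argument being elided --- including the correct identification of the whiskered components ($\eta_{\mb F,\mb FX}$ versus $\mb F\eta_{\mb F,X}$, and $\mu_{\mb F,\mb FX}$ versus $\mb F\mu_{\mb F,X}$) and the observation that naturality of $\eta_\mb F$ is what handles the variable case in the right-unit and associativity inductions.
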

\noindent
When no confusion would arise, we usually use the functor part to denote the whole monad structure. We call $\mb F$ the \emph{syntactic monad}.

Monads have algebras. Given the monad $\mb F$, there is an induced category of algebras $\Set^\mb F$ of $\mb F$, whose objects are maps of the form $\alpha : \mb F X \to X$, where $X$ is called the \emph{carrier} of $\alpha$, that further makes the following diagrammes commute,
\[
\begin{tikzcd}
  X \ar[dr, equal] \ar[r, "\eta_{\mb F,X}"] & \mb FX \ar[d, "\alpha"] \\
  & X
\end{tikzcd} \quad
\begin{tikzcd}
  \mb F\mb FX \ar[d, "\mb F\alpha"'] \ar[r, "\mu_{\mb F,X}"] & \mb FX \ar[d, "\alpha"] \\
  \mb FX \ar[r, "\alpha"'] & X
\end{tikzcd}
\]
We will use $\alpha$, as well as its carrier $X$ when there no confusion would arise, to denote an $\mb F$-algebra. A morphism between two algebras $\alpha : \mb FX \to X$ and $\beta : \mb FY \to Y$ is a function $f : X \to Y$ that makes the following diagramme commute,
\[
\begin{tikzcd}
  \mb FX \ar[r, "\mb Ff"] \ar[d, "\alpha"'] & \mb FY \ar[d, "\beta"] \\
  X \ar[r, "f"'] & Y
\end{tikzcd}
\]
For example, for any set $X$ the multiplication $\mu_{\mb F,X} : \mb F\mb FX \to \mb FX$ is actually an $\mb F$-algebra. It makes the corresponding diagrammes commute due to unit and associative laws of monad. This extends to an adjunction
\[ \ms F : \Set \gb \Set^\mb F : U_\mb F, \]
where $\mb F$ is the induced monad of this adjunction. $U_\mb F$ is the forgetful functor sending each $\mb F$-algebra to its carrier. Such an adjunction in particular means that the $\mb F$-algebra structure $\mu_{\mb F,X}$ on $\mb FX$ is the \emph{free $\mb F$-algebra} on $X$. Again, see \cite[Chapter.~VI]{maclane2013categories} for a more detailed description of algebras of a monad.

In the usual terminology of logic literatures, $\mb F$-algebras are precisely the term algebras for the given signature $\Sigma$, as we will show below. A $\Sigma$-algebra is a set $X$ equipped with operations $\star^X : X^n \to X$, for any $\star\in\Sigma$ that $\ar(\star)=n$. When $n = 0$ we identify $X^0$ as a singleton set $1$. A $\Sigma$-algebra morphism between two $\Sigma$-algebras is a map $f : X \to Y$ that preserves these operations, i.e. for any $\star\in\Sigma$ we have
\[ f(\star^X(x_1,\cdots,x_n)) = \star^Y(f(x_1),\cdots,f(x_n)). \]
There is then an evident category of $\Sigma$-algebras, which we denote as $\SAlg$. The following result is a consequence of a much more general correspondence between monads and algebraic theories.
\begin{lemma}
\label{lem:fsigalg}
  The category $\Set^\mb F$ and $\SAlg$ are equivalent (isomorphic).
\end{lemma}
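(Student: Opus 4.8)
The plan is to prove the stronger statement that $\Set^{\mb F}$ and $\SAlg$ are \emph{isomorphic} as categories, by writing down functors in both directions that are the identity on underlying sets and checking that they are mutually inverse. Every verification below is a structural induction over the inductive generation of $\mb F$, and the one computational tool needed throughout is the explicit inductive description of $\mu_{\mb F,X}$ recorded above.

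\textbf{From $\mb F$-algebras to $\Sigma$-algebras.} Given an $\mb F$-algebra $\alpha : \mb F X \to X$, I would endow its carrier $X$ with the $\Sigma$-algebra structure in which, for each $\star \in \Sigma$ of arity $n$,
\[ \star^{X}(x_1,\dots,x_n) := \alpha\bigl(\star(\eta_{\mb F,X}(x_1),\dots,\eta_{\mb F,X}(x_n))\bigr); \]
for $n=0$ this reads $\star^X := \alpha(\star)$, using $\star \in \mb F X$. Since a $\Sigma$-algebra is subject to no axioms, there is nothing further to check on objects. On morphisms the assignment is the identity: if $f : X \to Y$ underlies a morphism of $\mb F$-algebras $\alpha \to \beta$, so that $f(\alpha(\varphi)) = \beta(\mb F f(\varphi))$ for all $\varphi \in \mb F X$, then a one-line computation using the naturality of $\eta_{\mb F}$ gives $f(\star^X(x_1,\dots,x_n)) = \star^Y(f(x_1),\dots,f(x_n))$, so $f$ is a $\Sigma$-homomorphism.

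\textbf{From $\Sigma$-algebras to $\mb F$-algebras.} Given a $\Sigma$-algebra $(X,(\star^X)_{\star\in\Sigma})$, define $\alpha : \mb F X \to X$ by recursion on terms,
\[ \alpha(\eta_{\mb F,X}(x)) := x, \qquad \alpha(\star(\varphi_1,\dots,\varphi_n)) := \star^X(\alpha(\varphi_1),\dots,\alpha(\varphi_n)). \]
The unit law holds by the first clause. For the associativity law, one must check $\alpha(\mu_{\mb F,X}(\Psi)) = \alpha(\mb F\alpha(\Psi))$ for all $\Psi \in \mb F\mb F X$; this is proved by induction on $\Psi$. When $\Psi = \eta_{\mb F,\mb F X}(\varphi)$, both sides collapse to $\alpha(\varphi)$, using the first clause of the definition of $\mu_{\mb F,X}$, naturality of $\eta_{\mb F}$, and the unit law just established. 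When $\Psi = \star(\Psi_1,\dots,\Psi_n)$, the left side becomes $\star^X(\alpha(\mu_{\mb F,X}(\Psi_1)),\dots)$ and the right side $\star^X(\alpha(\mb F\alpha(\Psi_1)),\dots)$, which agree by the induction hypothesis. Again this assignment is the identity on morphisms: if $f$ is a $\Sigma$-homomorphism then $f(\alpha(\varphi)) = \beta(\mb F f(\varphi))$ for all $\varphi$ by a direct induction on $\varphi$.

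\textbf{Mutual inverseness and the main obstacle.} One composite is immediate: starting from a $\Sigma$-algebra, the reconstructed $\mb F$-algebra $\alpha$ satisfies $\alpha(\star(\eta_{\mb F,X}(x_1),\dots)) = \star^X(\alpha(\eta_{\mb F,X}(x_1)),\dots) = \star^X(x_1,\dots,x_n)$, so the induced $\Sigma$-structure is the original one. For the other composite, start from an $\mb F$-algebra $\alpha$, pass to the induced $\Sigma$-structure, and let $\alpha'$ be the $\mb F$-algebra recursively rebuilt from it; I claim $\alpha = \alpha'$, by induction on $\varphi \in \mb F X$. The base case uses the unit law for $\alpha$. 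For $\varphi = \star(\varphi_1,\dots,\varphi_n)$, apply the associativity square for $\alpha$ to the element $\Psi := \star(\eta_{\mb F,\mb F X}(\varphi_1),\dots,\eta_{\mb F,\mb F X}(\varphi_n)) \in \mb F\mb F X$: one computes $\mu_{\mb F,X}(\Psi) = \varphi$ and $\mb F\alpha(\Psi) = \star(\eta_{\mb F,X}(\alpha(\varphi_1)),\dots,\eta_{\mb F,X}(\alpha(\varphi_n)))$ by naturality of $\eta_{\mb F}$, whence $\alpha(\varphi) = \star^X(\alpha(\varphi_1),\dots,\alpha(\varphi_n)) = \star^X(\alpha'(\varphi_1),\dots,\alpha'(\varphi_n)) = \alpha'(\varphi)$ by the induction hypothesis. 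No step here is deep, but the point demanding the most care is the associativity square: it reappears both as the well-definedness condition for the second functor and, applied to the carefully chosen $\Psi$ above, as the crux of the identity $\alpha = \alpha'$; this is the only place where the monad multiplication law is genuinely consumed rather than mere functoriality of $\mb F$. A reader content with an abstract argument could instead note that the forgetful functor $\SAlg \to \Set$ is monadic, with left adjoint the free $\Sigma$-algebra, and that its induced monad is visibly $\mb F$; but the hands-on construction above is short enough to record directly.
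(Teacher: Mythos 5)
Your proof is correct and follows essentially the same route as the paper's own proof sketch: the same two object-level constructions (the recursively defined $\alpha$ from a $\Sigma$-algebra, and $\star^X_\alpha(x_1,\dots,x_n) := \alpha(\star(\eta_{\mb F,X}(x_1),\dots,\eta_{\mb F,X}(x_n)))$ in the other direction), with the mutual inverseness and the matching of homomorphism notions checked by structural induction. The only difference is that you carry out the inductions the paper leaves implicit, and your identification of the associativity square applied to $\star(\eta_{\mb F,\mb FX}(\varphi_1),\dots,\eta_{\mb F,\mb FX}(\varphi_n))$ as the one genuinely load-bearing step is exactly right.
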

\begin{proof}[Proof Sketch]
  We only describes how $\mb F$-algebras and $\Sigma$-algebras corresponds bijectively to each other. Given a $\Sigma$-algebra $X$, we can easily construct an $\mb F$-algebra
  \[ \gamma : \mb FX \to X, \]
  by defining inductively as follows
  \begin{align*}
    \gamma(\eta_{\mb F,X}(x)) &:= x, \\
    \gamma(\star(\varphi_1,\cdots,\varphi_n)) &:= \star^X(\gamma(\varphi_1),\cdots,\gamma(\varphi_n)),
  \end{align*}
  for any $\star\in\Sigma$. One can then prove inductively that
  \[ \eta_{\mb F,X} \cps \gamma = \id_X, \quad \mu_{\mb F,X} \cps \gamma = \mb F\gamma \cps \gamma, \]
  which means $\gamma$ is in fact an $\mb F$-algebra.

  On the other hand, given an $\mb F$-algebra $\alpha : \mb FX \to X$ we associate it with a $\Sigma$-algebra structure. For any $\star\in\Sigma$ with $\ar(\star)=n$, we can simply define
  \[ \star^X_\alpha(x_1,\cdots,x_n) := \alpha(\star(\eta_{\mb F,X}(x_1),\cdots,\eta_{\mb F,X}(x_n))). \]
  A further inductive proof shows that the above two processes are mutually inverse of each other, and under this bijective correspondence the notions of $\mb F$-algebra and $\Sigma$-algebra homomorphisms coincide.
\end{proof}

\begin{remark}
\label{rem:synpolymonad}
  Lemma~\ref{lem:fsigalg} actually implies that $\mb F$ is a \emph{free monad}, by the fact that there is another description of $\Sigma$-algebras using a polynomial functor. A polynomial functor is defined to be a functor in $[\Set,\Set]$ which is naturally isomorphic to a coproduct of representable functors. See \cite{gambino2013polynomial} for an introduction. Let $H : \Set \to \Set$ be the following functor
  \[ H = \sum_{\star\in\Sigma}\yon^{\ar(\star)}. \]
  where $\sum$ denotes the coproducts (disjoint unions). By definition $H$ is a polynomial functor, and it sends every set $X$ to the set $\sum_{\star\in\Sigma}X^{\ar(\star)}$. There is a similar notion of algebras for an arbitrary endo-functor on $\Set$, not only just for monads: An $H$-algebra is simply a set map $\gamma : HX \to X$ satisfying no further conditions, and there is an evident notion of morphisms between $H$-algebras. Hence, we have a category of $H$-algebras, which we denote as $\Alg H$. In this case, an $H$-algebra on $X$ is a function
  \[ \sum_{\star\in\Sigma} X^{\ar(\star)} \to X, \]
  which is equivalent to give for every $\star\in\Sigma$ an operation $X^n \to X$ where $n=\ar(\star)$. This means the notion of $\Sigma$-algebras is the same as $H$-algebras, and we then have yet a further equivalence (isomorphism)
  \[ \Set^\mb F \cong \Alg H. \]
  Such an equivalence in particular shows that $\mb F$ is the \emph{free monad} on the polynomial functor $H$, which implies that $\mb F$ itself is a polynomial monad, i.e. the functor $\mb F$ is also a polynomial functor such that the unit and multiplication of its monad structure are all cartesian. See \cite{kelly1980unifiedfree} for a detailed technical background on free monads and related conceptions. We will come back to this point in later sections. Such an observation makes it possible to further pursue a purely abstract description of syntactic monads based on free monads over a polynomial functor. We leave this for future works.
\end{remark}

We end this section by discussing finite products of $\mb F$. In later sections, besides mere formulas, we will also be interested in pairs of formulas, or more generally a sequent of some type. We can model these in our categorical framework by taking products of $\mb F$ in $[\Set,\Set]$. Now limits and colimits in a functor category like $[\Set,\Set]$ are taken component-wise; see \cite[Chapter.~2.15]{borceux1994handbook1} on limits and colimits in a functor category. This in particular means that for the product $\mb F^n$ in $[\Set,\Set]$ and for any set $X$, we have
\[ \mb F^n(X) = (\mb FX)^n. \]
Now the action of $\mb F$ on functions is also component-wise. For a tuple $(\varphi_1,\cdots,\varphi_n) \in \mb F^nX$, we have
\[ (\varphi_1,\cdots,\varphi_n)^f = \mb F^nf(\varphi_1,\cdots,\varphi_n) = (\mb Ff\varphi_1,\cdots,\mb Ff\varphi_n) = (\varphi_1^f,\cdots,\varphi_n^f). \]
This shows that $\mb F^n$ models point-wise substitution for tuples of formulas, which is exactly what we want.

All the above results have shown us that the syntactic monad $\mb F$ indeed contains all the relevant information about the syntax: As far as the single functor $\mb F$ goes, the set $\mb FX$ gives out the set of well-formed formulas constructed from the variable set $X$ for the given signature $\Sigma$, and the functor $\mb F$ acting on set maps $f : X \to Y$ models uniform substitution. This also works nicely if we consider products of $\mb F$. The monad structure on $\mb F$ further contains all the information about $\Sigma$-algebras. Lemma~\ref{lem:fsigalg}, together with the free $\mb F$-algebra functor $\ms F$, establishes the familiar fact that $\mb FX$ is the free $\Sigma$-algebra on the generating set $X$.

\section{Abstract Consequence Relations}
\label{sec:acrsup}
The syntactic monad $\mb F$ describes, by its name, the syntax of our logic, which is only establishing the ground work for the more important part of logic, viz. inferential and reasoning structures and semantic relations. In this section, we will focus on the proof-theoretic side of the story.

Reasoning in logic is usually represented abstractly in the form of consequence relations. From a non-categorical setting, with a fixed set of variables $X$ and hence a fixed set of formulas $\fml$, a consequence relation can be defined on various sets constructed from $\fml$, depending on the style of the inference system. An \emph{asymmetric} consequence relation is usually considered as a binary relation $\cons \subseteq \mb P\fml \times \fml$. A \emph{symmetric} consequence relation is a binary relation $\cons \subseteq \mb P\fml \times \mb P\fml$. It is well known that asymmetric and symmetric consequence relations are equivalent, and hence we will only consider symmetric ones in the remainder. Consequence relations on other sets are also studied. To provide an algebraic perspective towards logics and study their algebraisations we also want to consider consequence relations defined on pairs of formulas $\mr{Eq} = \fml \times \fml$, viz. $\cons \subseteq \mb P\mr{Eq} \times \mb P\mr{Eq}$. More generally, we may consider consequence relations defined on sequents. A sequent of type $(n,m)$ with $n,m\in\omega$ is usually denoted as
\[ \varphi_1,\cdots,\varphi_n \sqt \psi_1,\cdots,\psi_m. \]
In particular, a formula can be identified with a $(0,1)$-sequent, and a pair of formulas can be identified as a $(1,1)$-sequent. The set of all $(m,n)$-sequents $\mr{Seq}_{n,m}$ is simply given by $\fml^n \times \fml^m \cong \fml^{n+m}$. Hence basically, we want to consider consequence relations defined on finite products of $\fml$ in general.

In \cite{galatos&tsinakis2009consequence}, Galatos and Tsinakis have generalised consequence relations to arbitrary complete lattices, which leads to a very nice mathematical theory of consequence relations. In this section we will follow such a spirit to give a mathematically rigorous theory of consequence relations, but with several changes of perspective.

Firstly, a categorical mind set has taught us that in most of the mathematical context we should put objects, as well as corresponding \emph{maps between objects}, into consideration. From our point of view, the theory of abstract consequence relations is much more naturally related to \emph{suplattices}, rather than complete lattices. A suplattice by definition is a complete join-semi lattice, viz. a poset that has arbitrary joins. It is well-known that every suplattice is automatically complete, hence object-wise complete lattices and suplattices are the same thing.\footnote{A proof can be found in \cite[p.~27]{johnstone1982stone}.} However, a morphism between suplattices is only required to preserves arbitrary joins, rather than all meets and all joins. This makes $\SupL$, the category of suplattices, have very different categorical property than $\CompL$, the category of complete lattices.\footnote{One way to see their difference is that suplattices are \emph{algebraic} over $\Set$. In later sections we will describe that $\SupL$ is equivalent to the category of $\mb P$-algebras, where $\mb P$ is the covariant power set monad. In particular, this implies that all free-suplattice-constructions exists in $\Set$, due to the free suplattice functor $\ms P$ induced by $\mb P$. However, complete lattices are \emph{not} algebraic over $\Set$. The free complete lattices on $n > 2$ generators does not exists (or must form a proper class); see \cite[p.~31]{johnstone1982stone} for a proof.}

Using the language of suplattices, we can see much more clearly that consequence relations in logic are natural ways to construct \emph{quotients}, as we've mentioned briefly in Section~\ref{sec:intro}. Furthermore, the category $\SupL$ is very closely connected to the covariant power functor $\mb P$. As we will show later, there is a monad structure on $\mb P$, and the category $\Set^\mb P$ of $\mb P$-algebras is the same as suplattices. This naturally establishes sets including $\mb P\fml,\mb P\mr{Eq}$, or more generally $\mb P\mr{Seq}_{n,m}$, where consequence relations are usually built on, as typical examples of free suplattices. We will see more of such connections in the future part of this section.

Secondly, continuing to follow the philosophy of the previous section, we want the description of substitution and structurality of consequence relations to be directly built within our framework. In \cite{galatos&tsinakis2009consequence}, besides using complete lattices to define abstract consequence relations, it further describes substitution as some module structures --- complete lattices with an action of a residuated lattice modelling substitution --- which is another level of complexity we prefer not to have. In the following texts, we will again directly adopt a functorial approach to express structurality. These considerations have led to the following formulation.

We first give a brief introduction to suplattices, in particular morphisms between suplattices. By the adjoint functor theorem, a monotone map $f^* : P \to Q$ between two suplattice is a suplattice morphism, i.e. it preserves arbitrary joins, if and only if there is another monotone map $f_* : Q \to P$ such that $f^*$ is left adjoint to $f_*$
\[ f^* : P \gb Q : f_*; \]
it means for any $x\in P,y\in Q$ we have
\[ f^*(x) \le y \eff x \le f_*(y). \]
When $f^*$ is a suplattice morphism, $f_*$ can be explicitly described by the following formula
\[ f_*(y) = \bigvee\scomp{x\in P}{f^*(x) \le y}. \]
Similarly, we can express the left adjoint using the right adjoint
\[ f^*(x) = \bigwedge\scomp{y \in Q}{f_*(y) \ge x}. \]
We leave for the readers to check that $f^*,f_*$ indeed form an adjunction in both cases. This means that to show a monotone function is a suplattice morphism, it suffices to find a right adjoint. The right adjoint $f_*$ in general will \emph{not} be a suplattice morphism; it preserves arbitrary meets but not joins by general category theory. However, when $f^*$ is an isomorphism, $f_*$ will simply be the inverse of $f^*$ and both $f^*$ and $f_*$ will preserve arbitrary joins and meets. 

Notice that for a left adjoint $f^*$, it is injective if and only if it is an order embedding, viz. $x \le y \eff f^*(x) \le f^*(y)$. The same holds for a right adjoint. The following lemma shows that the right and left adjoint are closely connected.
\begin{lemma}
\label{lem:surinclus}
  Given a morphism between suplattices $f^* : P \to Q$, it is surjective (resp. injective) if and only its right adjoint $f_*$ is injective (resp. surjective).
\end{lemma}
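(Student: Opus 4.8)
The plan is to prove the two equivalences (surjective $\Leftrightarrow$ right adjoint injective, and injective $\Leftrightarrow$ right adjoint surjective) by exploiting the adjunction formulae recalled just above the statement, together with the standard fact that for an adjunction $f^* \dashv f_*$ the unit $x \le f_* f^*(x)$ and counit $f^* f_*(y) \le y$ are the relevant comparison maps, and that $f^*$ is injective (as a monotone map) precisely when it is an order embedding, i.e. when the unit is an isomorphism (equality), and dually $f_*$ is injective precisely when the counit is an equality. So the real content is to translate ``$f^*$ surjective'' into ``$f^* f_* = \id_Q$'' and ``$f^*$ injective'' into ``$f_* f^* = \id_P$'', and then read off each equivalence.

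First I would record the two key reformulations. For the counit: since $f^* f_*(y) \le y$ always holds, we have $f^* f_* = \id_Q$ iff $y \le f^* f_*(y)$ for all $y$; and I claim this holds iff $f^*$ is surjective. The forward direction is immediate. For the converse, if $f^*$ is surjective then given $y$ pick $x$ with $f^*(x) = y$; from $x \le f_* f^*(x) = f_*(y)$ (the unit) and monotonicity of $f^*$ we get $y = f^*(x) \le f^* f_*(y)$, as desired. Dually, since $x \le f_* f^*(x)$ always holds, $f_* f^* = \id_P$ iff $f_* f^*(x) \le x$ for all $x$, and this holds iff $f_*$ is surjective, by the mirror-image argument using the counit $f^* f_*(y) \le y$ with $y$ chosen so that $f_*(y) = x$. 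I would also note the clean symmetric fact, stated in the excerpt, that $f_*$ preserves arbitrary meets, so $f_* : Q^{\mathrm{op}} \to P^{\mathrm{op}}$ is itself a suplattice morphism with left adjoint $f^*$; this lets me get the second equivalence from the first by duality rather than re-running the argument.

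Next I would close the loop. \emph{$f^*$ surjective $\Rightarrow$ $f_*$ injective:} surjectivity gives $f^* f_* = \id_Q$, so $f_*$ has a left inverse and is injective. Conversely, \emph{$f_*$ injective $\Rightarrow$ $f^*$ surjective:} injectivity of $f_*$ means it is an order embedding, so $f^* f_*(y) \le y$ together with $f_*(f^* f_*(y)) = f_* f^* f_*(y) = f_*(y)$ (a triangle identity) forces $f^* f_*(y) = y$ by the embedding property applied to $f^* f_*(y) \le y$ — wait, more directly: $f_*$ being injective and $f_* f^* f_* = f_*$ (triangle identity) gives $f^* f_* = \id_Q$, hence $f^*$ surjective. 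Symmetrically, using the other triangle identity $f^* f_* f^* = f^*$: \emph{$f^*$ injective $\Rightarrow$ $f_* f^* = \id_P$ $\Rightarrow$ $f_*$ surjective}, and \emph{$f_*$ surjective $\Rightarrow$ $f^* $ has $f_*$-section-like behaviour} — precisely, $f_*$ surjective plus $f^* f_* f^* = f^*$ gives $f_* f^* = \id_P$, hence $f^*$ injective.

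The argument is essentially formal, so there is no deep obstacle; the only point requiring a little care is making sure I consistently use the correct triangle identity ($f^* f_* f^* = f^*$ versus $f_* f^* f_* = f_*$) in each implication, and remembering that ``injective'' for a monotone map between posets is equivalent to ``order-reflecting'' only because these are posets (antisymmetry), which is what lets injectivity of $f_*$ upgrade a triangle identity to an identity on the nose. I would present the proof compactly by first proving ``$f^*$ surjective $\Leftrightarrow$ $f^* f_* = \id_Q$'' and ``$f^*$ injective $\Leftrightarrow$ $f_* f^* = \id_P$'', then observing that each of these two equations is manifestly equivalent (via the triangle identities) to the corresponding surjectivity/injectivity statement for $f_*$, and finally invoking the meet-preserving duality to halve the work.
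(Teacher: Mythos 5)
Your proof is correct and follows essentially the same route as the paper's: both arguments reduce everything to the identities $f^*f_* = \id_Q$ (for surjectivity of $f^*$ / injectivity of $f_*$) and $f_*f^* = \id_P$ (for injectivity of $f^*$ / surjectivity of $f_*$), derived from the unit, counit, and monotonicity. The only differences are cosmetic --- you invoke the triangle identities where the paper uses the explicit join formula for $f_*$ and the order-embedding characterisation --- and you spell out the two converse implications that the paper leaves to the reader.
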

\begin{proof}
  We only show $f^*$ being surjective (resp. injective) implies $f_*$ being injective (resp. surjective). The other way around is similar and we leave for the readers to check.

  Suppose $f^*$ is surjective, since for any $y\in Q$ there exists some $x$ that $f^*(x) = y$, it follows that
  \[ f_*(y) = \bigvee\scomp{x\in P}{f^*(x) \le y} = \bigvee_{f^*(x) = y} x. \]
  We then further have
  \[ f^*f_*(y) = \bigvee_{f^*(x) = y} f^*(x) = y. \]
  This means $f_*$ must be injective. On the other hand, suppose $f^*$ is injective. For any $x,y\in P$, $x \le y \eff f^*(x) \le f^*(y)$. By the adjunction this further implies that
  \[ y \le f_*f^*(x) \eff f^*(y) \le f^*(x) \eff y \le x. \]
  Hence, $f_*f^*(x) = x$, which shows that $f_*$ must be surjective.
\end{proof}
\noindent
We refer the readers to \cite[Chapter.~I]{joyal1984extension} for a more complete formulation of the categorical properties of the category $\SupL$.

To work our step up to a fully functorial approach of abstract consequence relations, we first reformulate some of the results presented in \cite{galatos&tsinakis2009consequence} using suplattices. As we will show below, Lemma~\ref{lem:bijecpoint} is a strong evidence that considering suplattices is far more natural than considering complete lattices, and it gives us a first instance of our general philosophy that consequence relations are general ways to construct \emph{quotients} of structures.

The following definition of consequence relation on a single suplattice is adapted from \cite{galatos&tsinakis2009consequence}. We define an abstract consequence relation on a suplattice $P$ to be a \emph{preorder} $\cons$ on $P$, such that $\cons \supseteq \operatorname\ge$, and for any $x\in P$
\[ x \cons \bigvee\scomp{y\in P}{x \vdash y}. \]
We say \emph{$x$ implies $y$} if $x \cons y$ in $P$.

Another useful notion is that of a \emph{closure operator}, which corresponds to Tarski's original approach to abstract consequence relations in \cite{tarski1928remarques}. A closure operator $j$ on a suplattice $P$ is a \emph{monotone} function $j : P \to P$ which is \emph{extensive}, viz. $j(x) \ge x$ for any $x$, and \emph{idempotent}, viz. $jj(x)=j(x)$ for any $x$.

Finally, we present what we mean by a \emph{quotient} of a suplattice. A quotient of $P$ is a surjective suplattice homomorphism $e^* : P \surj Q$, considered \emph{up to isomorphism}. In other words, a quotient strictly speaking is an equivalence class of surjections out of $P$ in the category $\SupL$. Two surjections $e^*_1 : P \surj Q_1,e^*_2 : P \surj Q_2$ are considered equivalent if and only if there is an isomorphism $Q_1 \cong Q_2$ between suplattices making the following diagramme commute,
\[
\begin{tikzcd}
  & P \ar[dr, two heads, "e_2^*"] \ar[dl, two heads, "e_1^*"'] & \\
  Q_1 \ar[rr, "\cong"'] & & Q_2
\end{tikzcd}
\]
In the remaining texts we will loosely speak of a surjection as a quotient, but silently assuming any two isomorphic surjections in the above sense represent the same quotient.

From our knowledge the following lemma, which states that consequence relations and closure operators on a suplattice are the same as quotients, is at least folklore, if not well-known. The piece that consequence relations and closure operators on a suplattice corresponds bijectively is implicit in \cite{galatos&tsinakis2009consequence}. The fact that closure operators and quotients are the same is perhaps known to the experts for a much longer period --- a proof can be found in \cite[Section~I.4]{joyal1984extension}; a similar result for frames, left exact versions of suplattices, is also established in \cite[Section~II.2]{johnstone1982stone}. For the convenience of the readers we collect the pieces together and prove the following lemma.
\begin{lemma}
\label{lem:bijecpoint}
  Consequence relations, closure operators, and quotients of a suplattice bijectively correspond to each other.
\end{lemma}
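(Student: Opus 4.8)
The plan is to prove the three-way bijection by establishing two separate bijections, ``consequence relations $\leftrightarrow$ closure operators'' and ``closure operators $\leftrightarrow$ quotients'', and then composing them. Throughout, $P$ denotes a fixed suplattice.

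\emph{Consequence relations versus closure operators.} To a consequence relation $\vdash$ I would assign the operator $j_\vdash(x) := \bigvee\scomp{y\in P}{x\vdash y}$. Monotonicity follows because $x\le x'$ forces $x'\vdash x$ (as ${\vdash}\supseteq{\ge}$), and then transitivity gives $\scomp{y}{x\vdash y}\subseteq\scomp{y}{x'\vdash y}$; extensivity is reflexivity of $\vdash$; idempotence follows from the defining axiom $x\vdash j_\vdash(x)$ together with transitivity (if $j_\vdash(x)\vdash z$ then $x\vdash z$, so $z\le j_\vdash(x)$). Conversely, to a closure operator $j$ I assign the relation defined by $x\vdash_j y$ iff $y\le j(x)$; reflexivity, transitivity and ${\vdash_j}\supseteq{\ge}$ are immediate from extensivity, idempotence and monotonicity of $j$, and the remaining axiom holds because $\bigvee\scomp{y}{x\vdash_j y}=j(x)$. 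The two assignments are mutually inverse: $j_{\vdash_j}(x)=\bigvee\scomp{y}{y\le j(x)}=j(x)$, and conversely $x\vdash y$ iff $y\le j_\vdash(x)$, the nontrivial implication again using $x\vdash j_\vdash(x)$ and transitivity.

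\emph{Closure operators versus quotients.} From a closure operator $j$, form the poset $P_j:=\scomp{x\in P}{j(x)=x}$ of closed elements. One checks that $j(x)\le q$ iff $x\le q$ whenever $q$ is closed, so the corestriction $e_j^*:P\to P_j$, $x\mapsto j(x)$, is left adjoint to the inclusion $\iota:P_j\hookrightarrow P$; hence $P_j$ inherits arbitrary joins (computed as $j(\bigvee(-))$), the map $e_j^*$ preserves them (being a left adjoint), and $e_j^*$ is surjective since $e_j^*(q)=q$ for closed $q$ --- so $e_j^*$ is a quotient. Conversely, from a quotient $e^*:P\surj Q$ take the right adjoint $e_*$ (it exists because $e^*$ preserves joins); by Lemma~\ref{lem:surinclus} and its proof, $e_*$ is injective with $e^*\cps e_*=\id_Q$, so $j_{e^*}:=e^*\cps e_*$ is monotone, extensive (it is the unit of $e^*\dashv e_*$) and idempotent (using $e^*\cps e_*=\id_Q$), hence a closure operator; this is independent of the representative of the quotient, since an isomorphism $\theta$ between quotients has right adjoint $\theta^{-1}$ and the composites cancel. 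For the round trips: starting from $j$, the closure operator of $e_j^*$ is $e_j^*\cps\iota$, which sends $x$ to $\iota(j(x))=j(x)$, recovering $j$; starting from $e^*:P\surj Q$ with $j:=j_{e^*}$, the image of $e_*$ is exactly $P_j$, so $e_*$ corestricts to a bijection $Q\to P_j$ whose inverse is $e^*|_{P_j}$, this inverse is a suplattice isomorphism, and it intertwines $e_j^*$ with $e^*$, so the two quotients coincide. Composing the two bijections gives the stated correspondence.

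\emph{Main obstacle.} The order-theoretic correspondence between consequence relations and closure operators is routine; the real content lies in the closure-operator/quotient half, and within it the two delicate points are (i) that the fixed-point poset $P_j$ is genuinely a suplattice on which $e_j^*$ is a join-preserving surjection --- most cleanly obtained via the adjunction $e_j^*\dashv\iota$ rather than by direct computation with joins --- and (ii) that an \emph{arbitrary} quotient is isomorphic, as a quotient, to the fixed-point quotient of its associated closure operator, for which the key input is the identity $e^*\cps e_*=\id_Q$ extracted from the proof of Lemma~\ref{lem:surinclus}.
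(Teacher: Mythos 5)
Your proposal is correct and follows essentially the same route as the paper: the same two bijections (consequence relations versus closure operators via $j(x)=\bigvee\scomp{y}{x\vdash y}$ and $x\vdash_j y\eff y\le j(x)$; closure operators versus quotients via the fixed-point poset $P_j$ and the composite of a surjection with its right adjoint), with the same appeal to Lemma~\ref{lem:surinclus} for the identity $e^*e_*=\id_Q$. The only additions are cosmetic --- you phrase the suplattice structure on $P_j$ via the adjunction with the inclusion rather than by direct computation, and you explicitly check independence of the choice of representative of a quotient, which the paper leaves implicit.
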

\begin{proof}
  (1): We first show that consequence relations and closure operators on $P$ are bijectively correspondent. Given a closure operator $j$, we define $\cons_j$ to be
  \[ x \cons_j y \eff y \le j(x). \]
  It is a preorder:
  \[ x \le j(x) \nt x \cons_j x; \]
  \[ x \cons_j y, y \cons_j z \nt z \le j(y) \le jj(x) = j(x) \nt x \cons_j z. \]
  It contains $\ge$:
  \[ y \le x \nt y \le x \le j(x) \nt x \cons_j y. \]
  We also have
  \[ x \cons_j \bigvee_{x \cons_j y} y \eff \bigvee_{x \cons_j y}y \le j(x) \eff (x \cons_j y \nt y \le j(x)), \]
  Hence, $\cons_j$ is a well-defined consequence relation. On the other hand, given a consequence relation $\cons$, we define a closure operator $\gamma$ to be the following,
  \[ \gamma(x) = \bigvee_{x \cons y} y. \]
  It is obviously increasing because $\cons$ is reflexive. For monotonicity, suppose $x \le z$ and thus $z \cons x$. Given any $y$ that $x \cons y$, by transitivity we have $z \cons y$ as well, which implies 
  \[ \gamma(x) = \bigvee_{x \cons y} y \le \bigvee_{z \cons w} w = \gamma(z). \]
  Finally, to prove idempotence of $\gamma$ we show that
  \[ x \cons z \eff \gamma(x) \cons z. \]
  The left to right direction is trivial, since $\gamma(x) \ge x$ thus $\gamma(x) \cons x$. For the other direction we only need to observe that by definition $x \cons \gamma(x)$. Thus, we have
  \[ \gamma(\gamma(x)) = \bigvee_{\gamma(x) \cons y} y = \bigvee_{x\cons y} y = \gamma(x). \]
  Hence, $\gamma$ is a closure operator. Finally, it is easy to see that these operations are inverse to each other, since we have
  \[ x \cons_{\gamma} z \eff z \le \bigvee_{x \cons y} y \eff x \cons z, \]
  and also
  \[ \gamma_j(x) = \bigvee_{x \cons_j y} y = \bigvee_{y \le j(x)} y = j(x). \]

  (2): Next, we show that quotients of suplattices are in bijective correspondence to closure operators on $P$. Given a quotient
  \[ e^* : P \surj Q, \]
  we define $e : P \to P$ to be the following map
  \[ e(x) = e_*e^*(x). \]
  From Lemma~\ref{lem:surinclus} we know that for any $y \in Q$
  \[ e^*e_*(y) = y. \]
  This in particular implies that $e$ is a closure operator. On the other hand, given a closure operator, we defined a quotient
  \[ j^* : P \surj P_j, \]
  where $P_j$ is the set of fixed-points of $j$, or equivalently the image of $P$ under $j$, and
  \[ j^*(x) = j(x). \]
  By definition, $j^*$ is surjective. The right adjoint $j_*$ can simply taken to be the inclusion
  \[ j_* : P_j \hook P. \]
  They are indeed adjoint because for any $y\in P_j$, $j(x) \le y$ implies $x \le y$, and $x \le y$ implies $j(x) \le j(y) = y$, thus a closure operator indeed gives us a quotient. We show that these two operations are inverse to each other. It is obvious that the closure operator induced by the quotient $j^*$ is simply $j$. It remains to show that given a quotient $e^* : P \surj Q$, we must have $Q\cong P_e$. Observe the right adjoint $e_* : Q \to P$ actually restricts to a map
  \[ e_* : Q \to P_e. \]
  This is because for any $q\in Q$
  \[ ee_*(q) = e_*e^*e_*(q) = e_*(q). \]
  This restricted morphism is surjective, because for any $x \in P_e$ we have
  \[ x = e^*e_*(x). \]
  By Lemma~\ref{lem:surinclus} we also know $e_*$ is injective. $e_* : Q \to P$ is a right adjoint thus preserves meets; meets in $P_e$ are calculated the same as in $P$; hence the restricted map $e_* : Q \to P_e$ is indeed an isomorphism between posets, which in particular shows that $Q$ is isomorphic to $P_e$ as a suplattice.
\end{proof}

In the light of the above correspondence, given a quotient $j^* : P \surj P_j$ that corresponds to the closure operator $j$ and consequence relation $\cons$ on $P$, we call elements in $P_j$ as \emph{closed theories}, or simply say it is \emph{closed}, and also call the quotient suplattice $P_j$ \emph{the lattice of closed theories}. For any $x\in P$, its \emph{closure} is given by $j(x) = \bigvee_{x\cons y}y$, and every element implies its closure. This in particular shows that
\[ w \in P \text{ is closed } \ef \forall y \in P,\ w \cons y \nt y \le w. \]
In the below adjunction
\[ j^* : P \gb P_j : j_*, \]
$j^*$ is surjective and $j_*$ is an inclusion between posets. Since $j^*$ is a left adjoint thus preserves joins, it follows that joins in $P_j$ are computed as the closure of joins in $P$. Since $j_*$ is a poset-embedding and preserves arbitrary meets, $P_j$ must then be closed under arbitrary meets in $P$, and meets in $P_j$ are computed exactly the same as in $P$.

\begin{example}
\label{exp:imagefactor}
  Let's apply the above very useful lemma to a concrete example where we describe the image-factorisation of suplattices. As we will see later, the category $\SupL$ is \emph{algebraic} over $\Set$, i.e. there is a monad such that $\SupL$ is the category of algebras of this monad. In particular, this implies that we have image-factorisation in $\SupL$. We only need to observe that the construction of a closure operator induced by a surjection in the previous proof actually extends to arbitrary suplattice morphisms.

  Given any suplattice morphism
  \[ e^* : P \to Q, \]
  the map $x \mapsto e_*e^*(x)$ is indeed a closure operator. It is obviously monotone. $x \le e_*e^*(x)$ simply because $e_*$ is right adjoint to $e^*$ and $e^*(x)\le e^*(x)$. It is idempotent by noticing the following computation:
  \begin{align*}
    e^*e_*e^*(x)
    &= e^*\left(\bigvee\scomp{ y \in P }{e^*(y) \le e^*(x)}\right) \\
    &= \bigvee\scomp{e^*(y)}{e^*(y) \le e^*(x)} \\
    &= e^*(x)
  \end{align*}
  Let us denote this closure operator by $\epsilon$. We then have a factorisation of $e$ as follows,
  \[
  \begin{tikzcd}
    P \ar[rr, "e^*"] \ar[dr, two heads, "\epsilon^*"'] & & Q \\
    & P_\epsilon \ar[ur, tail, "e^*_\epsilon"']
  \end{tikzcd}
  \]
  The map $e^*_\epsilon$ sends any $x\in P_\epsilon$ to $e^*(x)$; in other words, it is the map $e^*$ restricted to $P_\epsilon$. The above diagramme indeed commutes
  \[ e_\epsilon^*\epsilon^*(x) = e^*e_*e^*(x) = e^*(x). \]
  The right adjoint of $e^*_\epsilon$ is given by $y \mapsto \epsilon^*e_*(y) = e_*e^*e_*(y)$, and we have
  \[ x \le e_*e^*e_*(y) \eff e^*(x) \le e^*e_*(y). \]
  By the adjunction we know that $e^*e_*(y)\le y$, hence $x \le e_*e^*e_*(y) \nt e_\epsilon^*(x) \le y$. On the other hand, suppose $e^*(x) \le y$ for $x\in P_\epsilon, y\in Q$. We further note
  \begin{align*}
    e^*(x) = e^*e_*e^*(x) \le y
    &\nt e_*e^*(x) \le e_*(y) \\
    &\nt e^*e_*e^*(x) = e^*(x) \le e^*e_*(y) \\
    &\nt x \le e_*e^*e_*(y).
  \end{align*}
  This proves that $y \mapsto e_*e^*e_*(y)$ is indeed the right adjoint of $e_\epsilon^*$. Hence, the above is indeed a commuting diagramme in $\SupL$. Finally, to show the above diagramme consists of the image-factorisation of $e^*$ we only need to observe that $e^*_\epsilon$ is injective. For any $x \in P_\epsilon$ we know that
  \[ e_*e^*(x) = x. \]
  This in particular shows that $e^*_\epsilon$, which is $e^*$ restricted to $P_\epsilon$, must be injective. Thus, $P_\epsilon$ is indeed the image of $e^*$, and the above diagramme depicts the image-factorisation.
\end{example}

The above is the whole story of how consequence relations relates to closure operators and quotients of suplattices. To further study \emph{structural} consequence relations, viz. consequence relations that are invariant under substitution, it is not sufficient to study an isolated suplattice. Inspired by how we have given a functorial treatment of syntax with built in substitutional structure, we extend our framework to further study consequence relations based on a pure functorial setting. Explicitly, we now consider functors of the form
\[ \mc A : \Set \to \SupL. \]
For any set map $f : X \to Y$, $\mc A$ induces a suplattice morphism
\[ \mc Af : \mc AX \to \mc AY. \]
This functorial dependence are used to model the abstract \emph{point-wise substitution}, as we will see more clearly later when we discuss concrete examples.

Let us first observe one very important functor of this type. As mentioned before, suplattices actually have a very close connection with the covariant power set functor $\mb P$.\footnote{The power set construction can be extended to a functor in both covariant and contravariant ways. The former is an endo-functor on $\Set$, while the latter is a functor from $\Set\op$ to $\Set$. In this paper we will exclusively consider the covariant case.} This naturally relates to our functorial approach to the usual examples of consequence relations in concrete logical systems, since as we've already seen the usual consequence relations are almost always defined on the power set of some sets related to the set of formulas.

The power set functor $\mb P$ sends each set $X$ to its power set $\mb PX$, and every function $f : X \to Y$ to a function $\mb P f : \mb P X \to \mb P Y$, which takes each subset $S$ of $X$ to the image $f(S)$ in $Y$. The crucial point is that $\mb P$ also has a monad structure $(\mb P,\eta_\mb P,\mu_\mb P)$. For any set $X$, the unit $\eta_{\mb P,X}$ sends $x$ to the singleton set $\set x$; the multiplication $\mu_{\mb P,X}$ sends a set of subsets of $X$ to their union. Just as Lemma~\ref{lem:fsigalg} shows for the syntactic monad $\mb F$ that $\mb F$-algebras are the same as $\Sigma$-algebras, it is well-known that $\mb P$-algebras are exactly \emph{suplattices}, i.e. we have an equivalence --- isomorphism, actually --- of categories
\[ \SupL \cong \Set^\mb P. \]
By the general theory of monads, this in particular shows that there is an induced adjunction
\[ \ms P : \Set \gb \SupL : U_\mb P. \]
For each set $X$, $\ms PX$ gives the free suplattice on $X$, which is the usual suplattice structure on the carrier set $\mb PX$ with inclusion being the partial order. In other words, the power set $\mb PX$ as a suplattice is free on the generating set $X$. This free suplattice functor $\ms P$ is then a typical example of a functor from $\Set$ to $\SupL$, and it is the free one. This makes it very convenient for us to describe consequence relations on concrete formulas in our functorial approach --- another reason to work with suplattices, rather than complete lattices.

Given any such functor $\mc A:\Set\to\SupL$, we also use $A$ to denote the composition with the forgetful functor from $\SupL$ to $\Set$,
\[ A = \mc A \cps U_\mb P : \Set \to \Set \]
For any set $X$, $AX$ considered as a set then has a unique $\mb P$-algebra structure induced by $\mc A$; hence we will also loosely view $AX$ as a suplattice when needed so. For any set map $f : X \to Y$, the induced map $Af$ is then a $\mb P$-algebra morphism, which means it preserves arbitrary joins in $AX$. We use $f^*(x)$ to denote $Af(x)$ for any $x\in AX$ when there is no confusion; of course, in that case $f_*$ will denote its right adjoint.

All such functors form a category $[\Set,\SupL]$. A morphism in $[\Set,\SupL]$ between two such functors would then a natural transformation
\[ q : \mc A \to \mc B. \]
This explicitly means that for any set $X$, $q_X$ is a morphism between suplattices; and for any set map $f : X \to Y$ the following naturality diagramme commutes
\[
\begin{tikzcd}
  \mc AX \ar[r, "q_X"] \ar[d, "\mc Af"'] & \mc BX \ar[d, "\mc Bf"] \\
  \mc AY \ar[r, "q_Y"'] & \mc BY
\end{tikzcd}
\]
On the level of their underlying sets, we also write
\[ q_X^* : AX \gb BX : q_{X,*}, \]
such that $q_X^*$ basically denotes the underlying function of $q_X$, and $q_{X,*}$ is the right adjoint of $q_X^*$.

One thing to notice is that for any such functor $\mc A$, $A$ would be an \emph{internal poset} in $[\Set,\Set]$. For any category $\catC$, an internal binary relation $R$ on some object $C$ in $\catC$ is simply a subobject $R \hook C \times C$. There are also definitions for an internal binary relation to be reflexive, symmetric, asymmetric, transitive, etc.. For our purposes though, it is enough to note that in a functor category like $[\Set,\Set]$, an internal binary relation is reflexive, symmetric, asymmetric, or transitive, if and only it is so component-wise. In particular, a binary relation $R \hook C \times C$ in $[\Set,\Set]$ is a preorder (resp. partial order) if and only if for any set $X$, $RX$ is a preorder (resp. partial order) on $CX$.

Now given a functor $\mc A : \Set \to \SupL$, the induced functor $A$ is indeed an internal poset in $[\Set,\Set]$. We have a following binary relation
\[ \operatorname\preceq \hook A \times A, \]
where for each set $X$
\[ \preceq_X = \scomp{(x,y)}{x \le y \text{ in } \mc AX}. \]
It is indeed a subfunctor of $A \times A$, because for any set map $f : X \to Y$, $Af$ is a map between suplattices, hence in particular monotone, which means that
\[ x \le y \nt f^*(x) \le f^*(y). \]
Evidently, $\preceq$ is an internal partial order in $[\Set,\Set]$. We also use $\succeq$ to denote its dual order; explicitly, for any set $X$ we have
\[ \succeq_X = \scomp{(x,y)}{ x \ge y \text{ in } \mc AX}. \]

To further address the point that for a functor $\mc A : \Set \to \SupL$ the induced suplattice morphisms $\mc Af$ should be understood as abstract point-wise substitution, we look at the free suplattice functor associated to the power set monad for example. We show how the functor $\ms P$ models the more complicated substitution on the level of power set of the set of formulas.

Consider the following composite functor
\[ \ms P\mb F : \Set \to \SupL. \]
It takes a set $X$ to the suplattice $\ms P\mb FX$ of the power set of the set of formulas $\mb FX$. Given any set map $f : X \to Y$ considered as specifying the way of substituting variables, there is an induced adjunction
\[ f^* : \mb P\mb FX \gb \mb P\mb FY : f_*. \]
By definition, for any $\Phi \subseteq \mb FX$ we have
\[ f^*(\Phi) = \mb P\mb Ff(\Phi) = \scomp{\varphi^f}{\varphi \in \Phi} = \Phi^f. \]
This means that $\ms P\mb F$ acting on a set map $f$ models point-wise substitution, which is exactly what we intend it to be. And simply from definition, such an operation preserves unions. Also, the left adjoint models inverse substitution: For any $\Psi \subseteq \mb FY$, by the description of right adjoint we have stated before,
\[ f_*(\Psi) = \scomp{\Phi\subseteq\mb FX}{\Phi^f \subseteq \Psi}. \]

Of course, the functor $\ms P$ itself is also another functor from $\Set$ to $\SupL$. We can view it as modelling the point-wise substitution when we have a \emph{trivial} language, i.e. there are no logic connectives in the language at all. The syntactic monad in this case is then simply given by the identity functor on $\Set$. The functor $\ms P$ then models point-wise substitution on the power set of formulas of this trivial language.

Our main goal in this section is to prove a similar result of Lemma~\ref{lem:bijecpoint}, showing that in the functorial setting the quotients are again in bijective correspondence to consequence relations, which lifts our general philosophy of viewing logics as a general way of constructing quotients to this functorial setting. To show this we first describes what quotients and structural consequence relations are in this new context.

A quotient of $\mc A$ should be understood as a surjection in the category $[\Set,\SupL]$, but again only considered up to isomorphism. Since limits and colimits are computed component-wise in a functor category, a surjection is then a natural transformation
\[ q : \mc A \surj \mc B, \]
such that for every set $X$ the component $q_X : \mc AX \surj \mc BX$ is a surjection of suplattices. In other words, $q$ is a quotient in the functor category $[\Set,\SupL]$ if and only if it is so for every component. Similar results holds for injections as well.

In the same spirit, we would also want to define what consequence relations are in this functorial setting in a component-wise manner. A structural consequence relation $\cons$ on $\mc A$ is a subfunctor on $A \times A$
\[ \cons \hook A \times A, \]
such that, point-wise, $\cons_X$ is a consequence relation on the suplattice $AX$ for any set $X$. In particular, this implies that $\cons$ is an \emph{internal preorder} on $A$ that contains $\succeq$,
\[
\begin{tikzcd}[column sep=0.6cm]
  \succeq \ar[rr, dashed, hook] \ar[dr, tail] & & \cons \ar[dl, tail] \\
  & A \times A &
\end{tikzcd}
\]
But a structural consequence relation $\cons$ in this functoriality setting contains more information than the mere point-wise consequence relations $\cons_X$. Functoriality of $\cons$ further implies \emph{structurality}, i.e. these point-wise consequence relations are invariant under substitutions. Previously we have shown that the functor $\mc A$, or similarly $A$, acting on morphisms models point-wise substitution; $\cons$ being a subfunctor of $A \times A$ means precisely that for any $x,y\in AX$, we have
\[ x \cons_X y \nt f^*(x) \cons_X f^*(y). \]
In our concrete example for consequence relations on $\ms P\mb F$, this means that for any subsets $\Phi,\Psi \subseteq \mb FX$ we have
\[ \Phi \cons_X \Psi \nt \Phi^f \cons_X \Psi^f, \]
exactly saying that the consequence relation is structural.

After given the necessary definitions, we first show a lemma stating that given a structural consequence relation, closed theories in each suplattice $AX$ are preserved by inverse substitution:
\begin{lemma}
\label{lem:closeinversesubst}
  Let $\cons$ be a structural consequence relation on $\mc A$. Then for any set map $f : X \to Y$, if $y\in AY$ is closed under $\cons_Y$, i.e. we have
  \[ y = \bigvee_{y \cons_Y w} w, \]
  then so is $f_*(y)\in AX$ under $\cons_X$.
\end{lemma}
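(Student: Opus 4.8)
The plan is to verify closedness of $f_*(y)$ via the characterisation recorded just after Lemma~\ref{lem:bijecpoint}: an element $w \in AX$ is closed under $\cons_X$ if and only if every $v \in AX$ with $w \cons_X v$ already satisfies $v \le w$. So I would fix an arbitrary $v \in AX$ with $f_*(y) \cons_X v$ and aim to show $v \le f_*(y)$, which by the adjunction $f^* \dashv f_*$ is equivalent to $f^*(v) \le y$.

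The first step is to push the hypothesis forward along $f$. Since $\cons$ is a subfunctor of $A \times A$, structurality gives $f^*(f_*(y)) \cons_Y f^*(v)$ in $AY$. The second step is to relate $f^*(f_*(y))$ back to $y$: the counit of the adjunction $f^* \dashv f_*$ yields $f^*(f_*(y)) \le y$, i.e. $y \succeq_Y f^*(f_*(y))$, and since a consequence relation contains $\succeq$ by definition, we get $y \cons_Y f^*(f_*(y))$. The third step is to chain these by transitivity of the preorder $\cons_Y$: from $y \cons_Y f^*(f_*(y))$ and $f^*(f_*(y)) \cons_Y f^*(v)$ we conclude $y \cons_Y f^*(v)$. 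Finally, because $y$ is closed under $\cons_Y$, the characterisation above forces $f^*(v) \le y$, hence $v \le f_*(y)$ by adjunction, completing the argument.

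I do not expect a genuine obstacle here; the content is a short diagram-chase through one application of structurality, one use of the counit, one use of transitivity, and one use of the closedness hypothesis. The only point demanding care is bookkeeping of the order directions --- in particular remembering that $\cons_Y$ runs \emph{along} $\ge$ (so $f^*(f_*(y)) \le y$ delivers $y \cons_Y f^*(f_*(y))$, not the reverse), and that the adjunction inequality $f^*(v) \le y \eff v \le f_*(y)$ is used in the correct direction. Once those are pinned down the proof is essentially forced.
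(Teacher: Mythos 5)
Your proposal is correct and follows exactly the same chain of implications as the paper's proof: structurality pushes $f_*(y) \cons_X v$ forward to $f^*f_*(y) \cons_Y f^*(v)$, the counit inequality plus $\cons_Y \supseteq \operatorname\ge$ and transitivity give $y \cons_Y f^*(v)$, closedness of $y$ yields $f^*(v) \le y$, and the adjunction converts this to $v \le f_*(y)$. The order-direction bookkeeping you flag is handled identically in the paper.
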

\begin{proof}
  By definition, to prove $f_*(y)$ is closed we only need to show
  \[ f_*(y) \cons_X z \nt z \le f_*(y). \]
  Note the following calculation
  \begin{align*}
    f_*(y) \cons_X z
    &\nt f^*f_*(y) \cons_Y f^*(z), \\
    &\nt y \cons_Y f^*(z), \\
    &\nt f^*(z) \le y, \\
    &\nt z \le f_*(y).
  \end{align*}
  The first implication holds by functoriality of $\cons$; the second holds because $f^*f_*(y) \le y$ thus $y \cons_Y f^*f_*(y)$, and transitivity of $\cons_Y$ implies the remainder; the third holds since $y$ is closed; and the final implication holds because $f_*$ is right adjoint to $f^*$.
\end{proof}

We can now show in the functorial setting that structural consequence relations are again the same as quotients.
\begin{proposition}
\label{prop:funcbij}
  For any $\mc A$ in $[\Set,\SupL]$, consequence relations and quotients on $\mc A$ corresponds bijectively to each other.
\end{proposition}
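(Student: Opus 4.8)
The plan is to build the correspondence in the functor category by leveraging the component-wise correspondence of Lemma~\ref{lem:bijecpoint}, with the only new content being that the data fits together \emph{naturally}. Given a structural consequence relation $\cons$ on $\mc A$, for each set $X$ Lemma~\ref{lem:bijecpoint} produces a closure operator $j_X$ on $AX$ and a quotient $q_X^* : AX \surj A_{j_X}X$, where $A_{j_X}X$ is the suplattice of $\cons_X$-closed elements. I would first assemble these into a functor $\mc B : \Set \to \SupL$ with $\mc BX := A_{j_X}X$, and show $q^* : \mc A \to \mc B$ is a natural transformation. Naturality is exactly where Lemma~\ref{lem:closeinversesubst} does the work: to define $\mc Bf : \mc BX \to \mc BY$ for $f : X \to Y$, recall $j_Y$ is computed via the closure $w \mapsto \bigvee_{w \cons_Y w'} w'$, and the right adjoint $q_{Y,*}$ is the inclusion of closed elements; I must check that $q_X^* \cps (\text{restriction of } Af)$ lands in closed elements and agrees with $Af \cps q_Y^*$ appropriately. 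The cleanest route is to observe that $\mc Bf$ should be $f^* = Af$ followed by $j_Y$-closure, i.e. $\mc Bf(x) = j_Y(f^*(x))$ for a closed $x \in AX$, and verify functoriality of this assignment together with the naturality square; idempotence and the adjunction identities from Example~\ref{exp:imagefactor} make this routine. That $q^*$ is a component-wise surjection is immediate, so it is a quotient in $[\Set,\SupL]$.

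Conversely, given a quotient $q : \mc A \surj \mc B$ in $[\Set,\SupL]$, for each $X$ the surjection $q_X^* : AX \surj BX$ yields by Lemma~\ref{lem:bijecpoint}(2) a closure operator $x \mapsto q_{X,*}q_X^*(x)$ on $AX$, hence a consequence relation $\cons_X$ on $AX$ defined by $x \cons_X y \eff y \le q_{X,*}q_X^*(x)$. The task is to verify that $\cons \hook A \times A$ defined component-wise is a \emph{subfunctor}, i.e. structural: for $f : X \to Y$ and $x \cons_X y$ I need $f^*(x) \cons_Y f^*(y)$, which unwinds to $f^*(y) \le q_{Y,*}q_Y^*f^*(x)$. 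Here I would use the naturality square for $q$, namely $q_Y^* \cps Bf$... more precisely $Bf \cps q_X^* = q_Y^* \cps Af$ read as $q_X^* \cps (Bf) = ...$; taking right adjoints of the naturality square $q_Y^* \circ Af = Bf \circ q_X^*$ gives $Af_* \circ q_{Y,*} \le q_{X,*} \circ Bf_*$ or an equality depending on which maps are involved, and combining with $y \le q_{X,*}q_X^*(x)$, monotonicity of $f^* = Af$, and the unit/counit inequalities yields the claim. I expect this verification to mirror the calculation in Lemma~\ref{lem:closeinversesubst} almost verbatim.

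Finally I would check the two constructions are mutually inverse. In one direction, starting from $\cons$, passing to the quotient $q^*$ and back recovers the closure operator $j_X$ on each $AX$, hence $\cons$ by Lemma~\ref{lem:bijecpoint}(1) applied component-wise; one must also confirm the recovered subfunctor structure agrees, but since the underlying component relations agree and both are subfunctors of the same $A \times A$, they coincide as subobjects. In the other direction, starting from a quotient $q : \mc A \surj \mc B$, the induced consequence relation gives back closure operators whose associated quotients are isomorphic to $q_X^*$ component-wise by Lemma~\ref{lem:bijecpoint}(2), and I must check these isomorphisms are natural in $X$ — but this follows because the isomorphism in Lemma~\ref{lem:bijecpoint} is canonically the corestriction of $q_{X,*}$, which is natural as a consequence of the naturality of $q$.

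\textbf{The main obstacle} I anticipate is purely the naturality bookkeeping: Lemma~\ref{lem:bijecpoint} is a statement about a single suplattice, so all the genuinely new content of Proposition~\ref{prop:funcbij} lies in threading functoriality through, and the delicate point is that quotient suplattices $\mc BX$ are defined only up to isomorphism, so one must make consistent canonical choices (fixed-point sets with the subspace order) to even make $\mc B$ a strict functor, and then verify every square commutes \emph{on the nose} rather than up to coherent isomorphism. Lemma~\ref{lem:closeinversesubst} is precisely the tool that makes the forward direction's naturality work, which signals that the converse direction's structurality check is the twin calculation; neither should be conceptually hard, but care is needed to keep left and right adjoints, and which of $\mc A$, $A$, $\mc B$, $B$ one is working with, straight.
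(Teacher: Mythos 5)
Your proposal is correct and follows essentially the same route as the paper's proof: apply Lemma~\ref{lem:bijecpoint} component-wise, use Lemma~\ref{lem:closeinversesubst} to restrict the right adjoints to closed elements so that $f_\gamma^*(x)=\gamma_Y(f^*(x))$ defines the functor of closed theories, verify the two naturality squares, and check that the component-wise isomorphisms $BX\cong A_\gamma X$ given by corestricting $q_{X,*}$ are natural. The only cosmetic difference is in the converse direction, where the paper avoids your right-adjoint chase by working directly with the equivalent formulation $x\cons_X y \eff q_X^*(y)\le q_X^*(x)$, so that structurality follows in one line from monotonicity of $f^*$ and the naturality of $q$.
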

\begin{proof}
  Given a quotient $q : \mc A \surj \mc B$, for any set $X$ we have a quotient map between suplattices
  \[ q_X^* : AX \surj BX, \]
  which by Lemma~\ref{lem:bijecpoint} uniquely induces a consequence relation $\cons_X$ on $AX$. Hence, we only need to show that the component-wise data $\cons_X \hook AX \times AX$ we get for every set $X$ indeed organise themselves into a subfunctor. Explicitly, we need to show that for any set map $f : X \to Y$ and any $x,y\in AX$,
  \[ x \cons_X y \nt f^*(x) \cons_Y f^*(y). \]
  Recall from Lemma~\ref{lem:bijecpoint}, the induced consequence relation is defined as follows
  \[ x \cons_X y \eff q_X^*(y) \le q_X^*(x). \]
  We then have
  \begin{align*}
    x \cons_X y
    &\nt q_X^*(y) \le q_X^*(x), \\
    &\nt f^*q_X^*(y) \le f^*q_X^*(x), \\
    &\nt q_Y^*f^*(y) \le q_Y^*f^*(x), \\
    &\nt f^*(x) \cons_Y f^*(y).
  \end{align*}
  The first and fourth implication holds by the definition of the induced consequence relation; the second implication holds by the fact that $f^*$ is monotone; the third holds by naturality of $q$. This then close one direction of the proof.

  On the other hand, suppose we are given a structural consequence relation on $\mc A$
  \[ \cons \hook A \times A. \]
  For any set $X$, $\cons_X$ is a consequence relation on $AX$. By Lemma~\ref{lem:bijecpoint} again, it uniquely induces a quotient map
  \[ \gamma_X^* : AX \surj A_\gamma X, \]
  where $A_\gamma X$ is the following set,
  \[ A_\gamma X = \scomp{x\in AX}{x = \bigvee_{x\cons_X y} y}. \]
  $\gamma_X$ sends any $x$ in $AX$ to its closure $\bigvee_{x\cons_X y} y$. Similarly, we only need to show the functoriality of this construction. Given any set map $f : X \to Y$, by Lemma~\ref{lem:closeinversesubst} we know that the right adjoint $f_*$ restricts to a map
  \[ f_{\gamma,*} : A_\gamma Y \to A_\gamma X. \]
  This restricted map actually preserves arbitrary meets, since, as we've mentioned before, meets in $A_\gamma X$ and $A_\gamma Y$ are computed the same as in $AX,AY$, respectively, and $f_*$ as a right adjoint preserves arbitrary meets. This in particular implies that we have a left adjoint
  \[ f_\gamma^* : A_\gamma X \to A_\gamma Y. \]
  Explicitly, the left adjoint $f_\gamma^*$ is given by
  \[ f_\gamma^*(x) = \gamma_Yf^*(x), \]
  where $\gamma_Y$ is the induced closure operator on $AY$. We show this by observing that for any $x \in AX$ and any $y \in A_\gamma Y$, since $y$ is closed the following holds,
  \[ \gamma_Yf^*(x) \le y \eff f^*(x) \le y \eff x \le f_*(y). \]
  This implies that we have a well-defined functor $\mc A_\gamma$. To now prove $\gamma$ is a quotient in $[\Set,\SupL]$, we only need to show the following naturality diagramme commutes,
  \[
  \begin{tikzcd}
    AX \ar[r, two heads, "\gamma_X^*"] \ar[d, "f^*"'] & A_\gamma X \ar[d, "f_\gamma^*"] \\
    AY \ar[r, two heads, "\gamma_Y^*"'] & A_\gamma Y
  \end{tikzcd}
  \]
  Explicitly, we need to show that for any $x\in AX$, the closure of $f^*\gamma_X(x)$ and $f^*(x)$ coincide. Since we know that $f^*(x) \le f^*\gamma_X(x)$, we only need to show
  \[ f^*(x) \cons_Y f^*\gamma_X(x). \]
  However, since every element implies its closure
  \[ x \cons_X \gamma_X(x), \]
  functoriality of $\cons_X$ then proves the above fact. Hence, we have shown that such a consequence relation indeed induces a quotient
  \[ \gamma : \mc A \surj \mc A_\gamma. \]

  Finally, we need to prove the two constructions are mutually inverse to each other. One direction is easier. Given a consequence relation $\cons$ on $\mc A$, since both $A_\gamma X$ and $\cons_{\gamma,X}$ are induced component-wise for any set $X$, it is a direct consequence of Lemma~\ref{lem:bijecpoint} that $\cons$ and $\cons_\gamma$ are the same consequence relation. On the other hand, given a quotient $q : \mc A \surj \mc B$, we need to show that the quotient $\gamma : \mc A \to \mc A_\gamma$ induced by the consequence relation $\cons_q$ is isomorphic to $q$. Again by the proof of Lemma~\ref{lem:bijecpoint}, component-wise we have an isomorphism
  \[ q_{X,*} : BX \cong A_\gamma X, \]
  for any set $X$. It is inherited from the right adjoint $q_{X,*}$ from $BX$ to $AX$. It remains to show naturality of this isomorphism, viz. to prove the following diagramme commutes,
  \[
  \begin{tikzcd}
    BX \ar[r, "q_{X,*}"] \ar[d, "Bf"'] & A_\gamma X \ar[d, "f_\gamma^*"] \\
    BY \ar[r, "q_{Y,*}"'] & A_\gamma Y
  \end{tikzcd}
  \]
  For any $x\in BX$, by definition we have
  \begin{align*}
    f_\gamma^*(q_{X,*}(x))
    &= \gamma_YAf\bigvee_{q^*_X(y) = x} y, \\
    &= q_{Y,*}\bigvee_{q^*_X(y) = x} q_Y^*(Af(y)), \\
    &= q_{Y,*} \bigvee_{q^*_X(y) = x} Bf(q_X^*(y)), \\
    &= q_{Y,*}(Bf(x)).
  \end{align*}
  The first equality holds by the definition of the right adjoint $q_{X,*}$ and $f_\gamma^*$; the second holds since $\gamma_Y = q_Y^* \cps q_{Y,*}$ and both $q_Y^*$ and $Af$ preserves joins; the third holds by naturality of $q$; and the final equality holds by direction computation. As a result,
  \[ q_{X,*} \cps f_\gamma^* = Bf \cps q_{Y,*}. \]
  Hence, the component-wise isomorphisms between $BX$ and $A_\gamma X$ indeed organize themselves to a natural isomorphism $\mc B \cong \mc A_\gamma$, representing the same quotient.
\end{proof}
We end this section by providing some abstract examples of how to build further consequence relations on existing ones. In the next section, we will initiate a more concrete study of how a general class of consequence relations can be induced from a class of algebras in our functorial setting, which closely connects to the usual algebraic semantics of a logic.
\begin{example}
\label{exp:openquotient}
  Suppose now we have a consequence relation $\cons$ on $\mc A$. We show how a generalised element of $A$ would induce a new structural consequence relation based on $\cons$. Let $a$ be a generalised element of $A$, viz. a natural transformation
  \[ a : p \to A. \]
  For any set $X$ we define a new consequence relation $\cons_X^a$ on $AX$ as follows: For any $x,y\in AX$, we define
  \[ x \cons_X^a y \eff x \vee \bigvee_{i \in p(X)}a_X(i) \cons_X y. \]
  The induced family of consequence relations $\cons_X^a$ is still structural. Given any $f : X \to Y$, we observe
  \begin{align*}
    f^*\left(x \vee \bigvee_{i \in p(X)}a_X(i) \right)
    &= f^*(x) \vee \bigvee_{i \in p(X)}f^*(a_X(i)), \\
    &= f^*(x) \vee \bigvee_{i \in p(X)}a_Y(pf(i)), \\
    &\le f^*(x) \vee \bigvee_{j \in p(Y)}a_Y(j).
  \end{align*}
  This in particular implies that
  \[ f^*(x) \vee \bigvee_{j \in p(Y)}a_Y(j) \cons_Y f^*\left(x \vee \bigvee_{i \in p(X)}a_X(i) \right). \]
  By structurality of $\cons$ and the above fact, we then have
  \[ x \cons_X^a y \nt f^*(x) \cons_Y^a f^*(y). \]
  Intuitively, the newly constructed structural consequence relation $\cons^a$ is one that induced from $\cons$ by always considering those elements in $AX$ from $p(X)$ along $a$ as axioms.

  The majority of cases that will be interesting in ordinary logical studies is when $p$ is a representable functor $\yon^X$. By the Yoneda lemma, a morphism $a$ from $\yon^X$ to $A$ is the same as an element in $a \in A(X)$. Given such an element, by definition we have
  \[ x \cons_y^a y \eff x \vee \bigvee_{\sigma : X \to Y} a^\sigma \cons_X y. \]
  This corresponds to adding the axiom scheme represented by $a \in A(X)$, viz. a collection of axioms closed under substitution, into our logical system.
\end{example}

Proposition~\ref{prop:funcbij} makes it clear that we can identity a consequence relation on a functor $\mc A$ as a quotient of $\mc A$, providing a purely categorical description of consequence relations. We will then use this identification in subsequent texts to study algebraisation of logics and semantics in general. These will be the topics of further sections.

\section{Algebraically Induced Consequence Relations}
\label{sec:algseman}
In this section, we consider abstract consequence relations generated by a subclass of algebras, which further constitutes the algebraic semantics of our logic. However, the general account of semantics of logic will be described in more detail in later sections. Let's first observe how the usual notion of an algebraic model of a logic on a fixed set of variables can be presented in our description of syntax using the syntactic monad $\mb F$.

Recall that the syntactic monad $\mb F$ induces an adjunction
\[ \ms F : \Set \gb \Set^\mb F : U_\mb F. \]
The free $\mb F$-algebra functor $\ms F$ sends any set $X$ to the \emph{free} $\mb F$-algebra $\mu_{\mb F,X} : \mb F\mb FX \to \mb FX$ on the generating set $X$.\footnote{Recall that $\mb F$-algebras and $\Sigma$-algebras are essentially the same thing.} Explicitly, it means that given any $\mb F$-algebra $\alpha : \mb F\mbb S \to \mbb S$ and any set map $e : X \to \mbb S$, there is a \emph{uniquely} induced $\mb F$-algebra morphism
\[ \qsi e : \mb FX \to \mbb S, \]
which means the following diagramme commutes,
\[
\begin{tikzcd}
  \mb F\mb FX \ar[r, "\mu_{\mb F,X}"] \ar[d, "\mb F\qsi e"'] & \mb FX \ar[d, "\qsi e"] \\
  \mb F\mbb S \ar[r, "\alpha"'] & \mbb S
\end{tikzcd}
\]
Explicitly, the function $\qsi e$ is given by the following composite
\[ \qsi e = \mb F e \cps \alpha. \]
By naturality of $\mu_\mb F$ and the fact that $\mbb S$ with $\alpha$ is an $\mb F$-algebra we can indeed show the above diagramme commutes. In the usual context, $X$ is understood as the set of variables, and the function $e$ is simply an evaluation function of atoms. The uniquely induced $\mb F$-algebra homomorphism $\qsi e$ is usually inductively defined manually based on $e$. This constitutes an algebraic model of $\mb FX$.
\begin{definition}
  An \emph{algebraic model} of $\mb FX$ is a pair $\pair{\mbb S,e}$, where $\mbb S$ is equipped with an $\mb F$-algebra structure $\alpha : \mb F\mbb S \to \mbb S$ and $e$ is an evaluation function from $X$ to $\mbb S$. Given an algebraic model $\pair{\mbb S,e}$, for any formula $\varphi \in \mb FX$, we write
  \[ \ass \varphi^e = \qsi e(\varphi), \]
  as denoting the evaluation of $\varphi$ in $\mbb S$ induced by $e$.
\end{definition}

In later sections we will study more generally what a genuine functorial description of semantics could be in our framework. Algebraic models in the above sense naturally provides the notion of satisfaction on \emph{pairs} of formulas. For any $\varphi,\psi\in\mb FX$, we will write $\varphi\approx\psi$ to denote the pair $(\varphi,\psi)$. Given an algebraic model $\pair{\mbb S,e}$, we say a pair $\varphi \approx \psi$ is \emph{satisfied} in $\pair{\mbb S,e}$, denoted as
\[ \mbb S, e \vDash_X \varphi \approx \psi, \]
if $\ass\varphi^e = \ass\psi^e$. For a set of pairs $E\subseteq\mb FX \times \mb FX$, we also write
\[ \mbb S, e \vDash_X E, \]
if for all pairs $\varphi\approx\psi\in E$, $\mbb S,e\vDash_X\varphi\approx\psi$.

The above description of algebraic models for $\mb FX$ over a set of variables $X$ then further leads us to study the consequence relations induced by a class of algebras. As hinted above, this consequence relation will not be based on $\ms P\mb F$, but should relate to \emph{pairs} of formulas. We use $\Eq$ to denote the functor $\mb F \times \mb F$. Recall at the end of Section~\ref{sec:synmonad} we have discussed how products of $\mb F$ behave: $\Eq$ sends each set $X$ to the product $\mb FX \times \mb FX$, the set of pairs of formulas over $X$. For any function $f : X \to Y$, the action of $\Eq$ on morphisms simply do substitution along $f$ for each entry of the pair
\[ (\varphi \approx \psi)^f = \Eq f(\varphi\approx\psi) = \varphi^f \approx \psi^f. \]
Similarly, for a subset $E\subseteq\Eq X$, we also write
\[ E^f = \mb P\Eq f (E) = \scomp{\varphi^f \approx \psi^f}{\varphi\approx\psi \in E}. \]

This point-wise definition of satisfaction naturally induce the notion of validity for a given a subclass $\mc K$ of $\mb F$-algebras, or more precisely a full subcategory of $\Set^\mb F$. We describe now how it will induce a consequence relation on the functor $\ms P\Eq$. For any $E,F\subseteq\Eq X$, we say
\[ E \vDash_{\mc K,X} F, \]
if for any algebraic model $\pair{\mbb S,e}$ in $\mc K$, $\mbb S,e \vDash_X E$ implies $\mbb S,e \vDash_X F$. As usually in logic, we if $E$ is the empty set we simply write
\[ \vDash_{\mc K,X} F. \]
Just like in the usual formulation of algebraic semantics of logic, $\vDash_{\mc K,X}$ is a component-wise consequence relation on the suplattice $\ms P\Eq X$, for any set $X$. To see that such data assemble themselves to a \emph{structural} consequence relation for the \emph{functor} $\ms P\Eq$, we need to further show the functoriality of $\vDash_{\mc K,X}$.

To do this, we first give a useful fact.
\begin{fact}
\label{lem:funcass}
  Let $\mc K$ be a subclass of $\mb F$-algebras. Given any $\mb F$-algebra $\alpha : \mb F\mbb S \to \mbb S$ and any evaluation map $e : X \to \mbb S$, for any function $f : Y \to X$ and for any $\varphi \in \mb FY$ the following holds
  \[ \ass{\varphi^f}^e = \ass{\varphi}^{f\cps e}. \]
\end{fact}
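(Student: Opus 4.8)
The plan is to reduce the statement to the explicit description of the free extension $\qsi e = \mb Fe \cps \alpha$ together with functoriality of $\mb F$. First I would unwind both sides in terms of the maps $\qsi{(-)}$. Since $\varphi^f = \mb Ff(\varphi)$, the left-hand side is
\[ \ass{\varphi^f}^e = \qsi e(\mb Ff(\varphi)) = (\mb Ff \cps \qsi e)(\varphi), \]
while the right-hand side is $\ass{\varphi}^{f\cps e} = \qsi{f\cps e}(\varphi)$. Hence it suffices to prove the equality of functions $\mb Ff \cps \qsi e = \qsi{f\cps e} : \mb FY \to \mbb S$, and then evaluate at $\varphi$.

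For the displayed equality I would argue directly from $\qsi{(-)} = \mb F(-) \cps \alpha$:
\[ \mb Ff \cps \qsi e = \mb Ff \cps \mb Fe \cps \alpha = \mb F(f\cps e) \cps \alpha = \qsi{f\cps e}, \]
where the outer equalities are the explicit formula for the free extension and the middle one is functoriality of $\mb F$ applied to $f$ and $e$ — here one must be slightly careful with the paper's convention, in which $f\cps e$ denotes the ordinary composite $e \circ f$, so that $\mb Ff \cps \mb Fe = \mb F(e \circ f) = \mb F(f \cps e)$. This is essentially the whole proof.

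As a more conceptual alternative I would invoke uniqueness of the free extension: $\mb Ff$ underlies an $\mb F$-algebra morphism $\mu_{\mb F,Y} \to \mu_{\mb F,X}$ (this is exactly naturality of $\mu_\mb F$, i.e. $\mb Ff = U_\mb F\ms Ff$), and $\qsi e$ is an $\mb F$-algebra morphism $\mu_{\mb F,X} \to \alpha$ by construction, so their composite is an $\mb F$-algebra morphism $\mu_{\mb F,Y}\to\alpha$; using naturality of $\eta_\mb F$ and the defining relation $\eta_{\mb F,X}\cps\qsi e = e$ one computes $\eta_{\mb F,Y}\cps(\mb Ff\cps\qsi e) = f\cps\eta_{\mb F,X}\cps\qsi e = f\cps e$, so by the universal property of the free algebra on $Y$ this composite must coincide with $\qsi{f\cps e}$. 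Either way the argument is short; the only genuine point of care — and the only place an error could creep in — is tracking the composition-order convention ($\cps$ versus $\circ$) when turning ``substitute, then evaluate'' into a single composite.
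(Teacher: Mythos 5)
Your primary argument is exactly the paper's proof: unwind $\varphi^f=\mb Ff(\varphi)$, use the explicit formula $\qsi e=\mb Fe\cps\alpha$, and apply functoriality of $\mb F$ to get $\mb Ff\cps\mb Fe=\mb F(f\cps e)$, hence $\qsi{f\cps e}$. The proposal is correct, and the alternative via the universal property of the free algebra is a valid (if unnecessary) bonus.
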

\begin{proof}
  This can be directly calculated from relevant definition on evaluation function of formulas and the functorial representation of substitution: For any $\varphi\in\mb FY$ we have
  \[ \ass{\varphi^f}^e = \qsi e(\mb Ff\varphi) = \alpha(\mb Fe(\mb Ff\varphi)) = \alpha\mb F(f\cps e)(\varphi) = \ass\varphi^{f \cps e}. \qedhere \]
\end{proof}
We can now show that the class of algebras $\mc K$ induces a well-defined structural consequence relation $\vDash_{\mc K}$ on $\ms P\Eq$, such that for any set $X$, its component $\vDash_{\mc K,X}$ is the previously defined component-wise consequence relation on $\ms P\mb FX$.
\begin{proposition}
\label{prop:algecons}
  The component-wise relations $\vDash_{\mc K,X}$ extends to a subfunctor
  \[ \operatorname{\vDash_\mc K} \hook \mb P\Eq \times \mb P\Eq, \]
  constituting a structural consequence relation on $\ms P\Eq$.
\end{proposition}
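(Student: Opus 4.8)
The plan is to verify two things: first, that each $\vDash_{\mc K,X}$ is genuinely a consequence relation on the suplattice $\mb P\Eq X$ in the sense defined in Section~\ref{sec:acrsup} (i.e. a preorder containing $\supseteq$ satisfying the closure condition $E \vDash_{\mc K,X} \bigvee\{F : E \vDash_{\mc K,X} F\}$); and second, the functoriality condition, namely that for any $f : X \to Y$ and any $E, E' \subseteq \Eq X$ we have $E \vDash_{\mc K,X} E' \Rightarrow E^f \vDash_{\mc K,Y} (E')^f$. The first part is essentially the routine semantic bookkeeping one does in the non-functorial setting: reflexivity and transitivity are immediate from the definition via "satisfies $E$ implies satisfies $E'$"; containment of $\supseteq$ holds because if $E' \subseteq E$ then any model satisfying $E$ satisfies $E'$ (satisfaction of a set of pairs is just conjunction over its members, hence monotone decreasing in the set); and the join condition holds because $\mbb S, e \vDash_X \bigcup_j F_j$ iff $\mbb S, e \vDash_X F_j$ for all $j$, so the union of all semantic consequences of $E$ is again a semantic consequence of $E$. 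I would state these quickly and move on.

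The heart of the argument is functoriality, and this is where Fact~\ref{lem:funcass} does the work. Suppose $E \vDash_{\mc K,X} E'$, and let $\pair{\mbb S, e}$ be any algebraic model in $\mc K$ with $\mbb S, e \vDash_Y E^f$; I want $\mbb S, e \vDash_Y (E')^f$. The key observation is that precomposition of the evaluation map converts satisfaction of a substituted set into satisfaction of the original set along a reindexed evaluation: for any $\varphi \approx \psi \in \Eq X$, by Fact~\ref{lem:funcass} we have $\ass{\varphi^f}^e = \ass{\varphi}^{f \cps e}$ and likewise for $\psi$, so $\mbb S, e \vDash_Y (\varphi \approx \psi)^f$ iff $\mbb S, f\cps e \vDash_X \varphi \approx \psi$. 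Applying this uniformly over all members of $E$ gives $\mbb S, f \cps e \vDash_X E$ from the hypothesis $\mbb S, e \vDash_Y E^f$. Now $f \cps e : X \to \mbb S$ is again an evaluation map into an algebra of $\mc K$, so $\pair{\mbb S, f\cps e}$ is a model in $\mc K$; since $E \vDash_{\mc K,X} E'$, we get $\mbb S, f\cps e \vDash_X E'$. Running the same equivalence backwards over the members of $E'$ yields $\mbb S, e \vDash_Y (E')^f$, as desired. Since this holds for every model in $\mc K$, we conclude $E^f \vDash_{\mc K,Y} (E')^f$.

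Finally I would note that this functoriality statement is exactly the condition for the component-wise relations $\vDash_{\mc K,X} \hook \mb P\Eq X \times \mb P\Eq X$ to organize into a subfunctor $\operatorname{\vDash_{\mc K}} \hook \mb P\Eq \times \mb P\Eq$ (using that $\mb P\Eq$ acts on $f$ by $E \mapsto E^f$, as recorded just before the proposition), and combined with the first part — each fibre being a consequence relation on the suplattice $\ms P\Eq X$ — this is precisely the definition of a structural consequence relation on $\ms P\Eq$ given in Section~\ref{sec:acrsup}. I do not expect any real obstacle here: the only subtlety is making sure the "satisfaction of a set of pairs" notation is handled as a conjunction so that monotonicity and the join condition are transparent, and being careful that $f \cps e$ lands in the same class $\mc K$ (which is automatic since $\mc K$ is a full subcategory of $\Set^\mb F$, so membership depends only on the underlying algebra $\mbb S$, not on the evaluation map). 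The whole proof is a translation of the classical "structurality of the equational consequence of a class of algebras" argument into the functorial language, with Fact~\ref{lem:funcass} packaging the substitution lemma.
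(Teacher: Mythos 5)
Your proof is correct and follows essentially the same route as the paper: the heart in both cases is the equivalence $\mbb S,e \vDash E^f \Leftrightarrow \mbb S, f\cps e \vDash E$ obtained from Fact~\ref{lem:funcass}, followed by the same three-step chain of implications using that $f \cps e$ is again an evaluation into the same algebra of $\mc K$. The only difference is that you additionally spell out the routine verification that each $\vDash_{\mc K,X}$ is a consequence relation on the suplattice $\ms P\Eq X$, which the paper asserts without proof before the proposition and then restricts its proof to functoriality alone.
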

\begin{proof}
  As mentioned before, we only need to show the functoriality of $\vDash_{\mc K}$, i.e. given any function $f : Y \to X$ and $E,F\subseteq\Eq Y$, we need to show that
  \[ E \vDash_{\mc K,Y} F \nt E^f \vDash_{\mc K,X} F^f. \]
  For any $\mb F$-algebra $\alpha : \mb F\mbb S \to \mbb S$ in $\mc K$, any evaluation function $e : X \to \mbb S$ and any function $f : Y \to X$, from Fact~\ref{lem:funcass} we have the following observation: For any subset $E \subseteq \Eq Y$, we have
  \begin{align*}
    \mbb S,e \vDash_{X} E^f
    &\eff \forall \varphi\approx\psi \in E,\ \mbb S,e \vDash_{X} \varphi^f \approx \psi^f, \\
    &\eff \forall \varphi \approx \psi \in E,\ \ass{\varphi^f}^e = \ass{\psi^f}^e, \\
    &\eff \forall \varphi \approx \psi \in E,\ \ass{\varphi}^{f \cps e} = \ass{\psi}^{f \cps e}, \\
    &\eff \forall \varphi \approx \psi \in E,\ \mbb S, f\cps e \vDash_{X} \varphi \approx \psi, \\
    &\eff \mbb S,f\cps e \vDash_{Y} E.
  \end{align*}
  Now suppose we have $E,F \subseteq \Eq Y$ that $E \vDash_{\mc K,Y} F$. Then by the above computation we have
  \[ \mbb S,e \vDash_{X} E^f \nt \mbb S,f\cps e \vDash_{Y} E \nt \mbb S,f\cps e\vDash_{Y} F \nt \mbb S,e \vDash_{X} F^f. \]
  The first and last implication follows from the above equivalence; the second implication follows from the fact that $E \vDash_{\mc K,Y} F$. This completes the proof that $\vDash_\mc K$ is functorial, and thus constitutes a structural consequence relation on $\ms P\Eq$.
\end{proof}

We end this section by considering the special case where $\mc K$ is a variety of algebras. A variety of algebras, by definition, is an equational class; it is a subclass of the class of all algebraic structures of a given signature that satisfies a given set of identities. In particular, it is (strictly) monadic over $\Set$: There exists a monad $\mb K$, such that we have an equivalence (isomorphism) of categories
\[ \mc K \cong \Set^\mb K. \]
Such a setting allows us to generate the Lindenbaum–Tarski algebra construction from general category theory, by exploring the duality between monads and monad maps on one hand, and categories of monad algebras (monadic functors) on the other hand.

The collection of monads over $\Set$ actually forms a category $\Mon$. Given two monads $\mb F,\mb K$, a morphism from $\mb F$ to $\mb K$ in $\Mon$ is a natural transformation
\[ \lambda : \mb F \to \mb K, \]
that interacts well with the monad structure of both $\mb F$ and $\mb K$, making the following two diagrammes commute,
\[
\begin{tikzcd}
  \yon \ar[r, "\eta_\mb F"] \ar[dr, "\eta_\mb K"'] & \mb F \ar[d, "\lambda"] \\
  & \mb K
\end{tikzcd} \quad \quad
\begin{tikzcd}
  \mb F \mb F \ar[r, "\mu_\mb F"] \ar[d, "\lambda \circ \lambda"'] & \mb F \ar[d, "\lambda"] \\
  \mb K \mb K \ar[r, "\mu_\mb K"'] & \mb K
\end{tikzcd}
\]
Such a monad map induces a pullback functor between the corresponding categories of algebras
\[ \lambda^* : \Set^\mb K \to \Set^\mb F. \]
For any $\mb K$-algebra $\alpha : \mb KS \to S$, $\lambda^*\alpha$ is the following composition
\[ \lambda^*\alpha = \lambda_S \cps \alpha : \mb FS \to \mb KS \to S. \]
One can directly verify that $\lambda^*\alpha$ is indeed an $\mb F$-algebra, using the fact that $\lambda$ is a monad map and $\alpha$ is a $\mb K$-algebra, which we leave for the readers to check. For a given $\mb K$-algebra morphism $f : S \to T$ from $\alpha : \mb KS \to S$ to $\beta : \mb KT \to T$, it also lifts to a morphism $\lambda^*f$ from $\lambda^*\alpha$ to $\lambda^*\beta$ by simply using $f$ itself,
\[
\begin{tikzcd}
  \mb FS \ar[r, "\lambda_S"] \ar[d, "\mb Ff"] & \mb KS \ar[r, "\alpha"] \ar[d, "\mb Kf"] & S \ar[d, "f"] \\
  \mb FT \ar[r, "\lambda_T"'] & \mb KT \ar[r, "\beta"'] & T
\end{tikzcd}
\]
It is easy to see from the above definition that the induced functor $\lambda^*$ between the two category of algebras are compatible with the forgetful functors $U_\mb K,U_\mb F$, i.e. we have
\[ \lambda^* \cps U_\mb F = U_\mb K. \]

On the other hand of the duality lies monadic functors on $\Set$ over $U_\mb F$. A monadic functor is by definition equivalent to a forgetful functor for some monads, and a morphism between monadic functors $U_\mb T,U_\mb F$ is then a single functor $G : \Set^\mb T \to \Set^\mb F$ that commutes with the forgetful functors $U_\mb T,U_\mb F$,
\[
\begin{tikzcd}
  \Set^\mb T \ar[dr, "U_\mb T"'] \ar[rr, "G"] & & \Set^\mb F \ar[dl, "U_\mb F"] \\
  & \Set &
\end{tikzcd}
\]
We use $\MnF$ to denote the category of monadic functors on $\Set$.

Recall that for any set $X$, $\ms KX$, the free $\mb K$-algebra on $X$, is given by the multiplication $\mu_{\mb T,X} : \mb T\mb T X \to \mb TX$. Since $G$ commutes with the forgetful functors, it then follows that the functor $G$ then associate an $\mb F$-algebra structure on $\mb TX$
\[ G\ms KX : \mb F\mb TX \to \mb TX. \]
Again, $\mu_{\mb F,X}$ is the free $\mb F$-algebra on $X$, and we have a canonical map $\eta_{\mb T,X}$. This then implies that we have a canonically induced $\mb F$-algebra morphism
\[ \qsi{\eta_{\mb T,X}} = \mb F\eta_{\mb T,X} \cps G\ms KX : \mb FX \to \mb F\mb TX \to \mb TX. \]
We can then simply define a natural transformation $\delta : \mb F \to \mb T$, making the $X$-component of $\delta$ be $\qsi{\eta_{\mb T,X}}$. Naturality of $\delta$, and the fact that it is furthermore a monad map from $\mb F$ to $\mb T$, follows from the fact that $G$ is a functor that commutes with the two forgetful functors. It is well-known that the above described two-sided constructions indeed form a functorial bijective correspondence. We refer the readers to \cite[p.~108]{manes2003monads} for a proof. Here we in particular notice the following refined correspondence:
\begin{lemma}
\label{lem:quomonadfullsub}
  Quotient monad maps $\lambda : \mb F \surj \mb K$ on syntactic monad $\mb F$, viz. monad maps that are component-wise surjective out of $\mb F$, bijectively corresponds to varieties of algebras $\mc K$ that form a full subcategory of $\Set^\mb F$.
\end{lemma}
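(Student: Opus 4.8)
The plan is to exploit the functorial bijective correspondence between monad maps $\lambda : \mb F \to \mb T$ and morphisms of monadic functors $G : \Set^{\mb T} \to \Set^{\mb F}$ over $U_{\mb F}$, which is recalled just before the statement. What I need to check is that under this correspondence, the subclass of \emph{surjective} (component-wise epi) monad maps out of $\mb F$ is matched precisely with the subclass of $G$'s that are \emph{full and faithful} and whose essential image is a \emph{full replete subcategory} of $\Set^{\mb F}$ — equivalently, $G$ exhibits $\Set^{\mb T}$ as (isomorphic to) a full subcategory of $\Set^{\mb F}$ closed under isomorphism. Since $\Set^{\mb T}$ is automatically a variety (it is monadic over $\Set$), and conversely every variety $\mc K$ that happens to be a full subcategory of $\Set^{\mb F}$ arises as $\Set^{\mb T}$ for its own monad $\mb T = \mb K$ with the inclusion as the comparison functor $G$, this yields the claimed bijection.

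First I would unpack what it means for $G : \Set^{\mb T} \to \Set^{\mb F}$ (commuting with forgetful functors) to be full and faithful. Because $G$ commutes with the faithful functors $U_{\mb T}, U_{\mb F}$ and acts as the identity on underlying sets and maps, $G$ is \emph{automatically faithful}; fullness is the real content, and it says: whenever a function $f : S \to T$ between carriers of $\mb T$-algebras is an $\mb F$-algebra morphism for the pulled-back structures $\lambda^*\alpha, \lambda^*\beta$, it is already a $\mb T$-algebra morphism. Next I would translate this condition on $\lambda$. Using that $\mu_{\mb T, X} : \mb T\mb T X \to \mb T X$ is the free $\mb T$-algebra and that $\delta_X = \qsi{\eta_{\mb T,X}} : \mb F X \to \mb T X$ is, up to the correspondence, the comparison map, one shows: $G$ is full $\iff$ for every set $X$ the map $\delta_X : \mb F X \to \mb T X$ is surjective. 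The direction ($\delta_X$ epi $\Rightarrow$ $G$ full) is the key computation: given $f$ commuting with the $\mb F$-structures, precompose with the universal map $\mb F X \to S$ (for a presentation of $S$ as a quotient of a free $\mb F$-algebra) and use surjectivity of $\delta_X$ together with the fact that $\mb T$-algebra maps are determined by their action on generators; the reverse direction follows by noting that if $\delta_X$ failed to be surjective, the two $\mb F$-algebra structures induced on $\mb T X$ and on the $\mb F$-subalgebra generated by its image would admit an $\mb F$-morphism between carriers that is not a $\mb T$-morphism.

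Having reduced fullness of $G$ to component-wise surjectivity of $\delta$, I would then connect $\delta$ to $\lambda$. Under the two-sided correspondence of \cite[p.~108]{manes2003monads}, the monad map associated to $G$ recovers $\lambda$ (this is precisely the statement that the two constructions are mutually inverse), so $\delta = \lambda$ and hence $G$ is full iff $\lambda$ is component-wise surjective, i.e. $\lambda$ is a quotient monad map. Finally I would assemble the equivalence: a quotient monad map $\lambda : \mb F \surj \mb K$ gives a monadic $\Set^{\mb K}$ together with a full faithful $\lambda^* : \Set^{\mb K} \to \Set^{\mb F}$ over $\Set$, whose replete image is a full subcategory of $\Set^{\mb F}$ that is a variety; conversely, a variety $\mc K$ realised as a full subcategory of $\Set^{\mb F}$ is, being monadic, of the form $\Set^{\mb K}$ for a monad $\mb K$, and the inclusion is a morphism of monadic functors, hence corresponds to a monad map $\mb F \to \mb K$ which by the previous paragraph must be component-wise surjective. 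One should also check these two assignments are mutually inverse \emph{as identifications} (not merely bijections of isomorphism classes), which is immediate from the mutual-inverseness of the monad/monadic-functor correspondence already cited.

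The main obstacle I anticipate is the equivalence ``$G$ full $\iff$ $\delta$ component-wise surjective.'' Fullness is a genuinely non-formal condition, and extracting surjectivity of $\delta_X$ from it — or, in the easy direction, leveraging surjectivity of $\delta_X$ to conclude an $\mb F$-morphism between $\mb T$-algebras respects the $\mb T$-structure — requires carefully using the universal property of free algebras and the fact that a $\mb T$-algebra map is determined by (and freely extends) its restriction to generators. One subtlety to handle with care: a $\mb T$-algebra $S$ is a quotient $\mb T Z \surj S$ of a free $\mb T$-algebra, and one must feed the right generating set $Z$ (e.g. $Z = U_{\mb T} S$) into the argument so that the surjectivity of $\delta_Z$ can be applied; getting this bookkeeping right, rather than any deep idea, is where the proof needs attention.
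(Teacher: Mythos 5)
There is a genuine gap, and it sits exactly where you flag the ``main obstacle'': the equivalence ``$G$ is full $\iff$ $\delta$ is component-wise surjective'' is false, and your converse argument for it does not work. Only the forward direction holds (component-wise surjectivity of $\lambda$ gives injectivity on objects, faithfulness and fullness of $\lambda^*$ by cancelling $\lambda_X$ against the naturality square --- this is the paper's first half and your sketch of it is fine). For the converse, consider the signature $\Sigma=\{\cdot,e\}$ with $\mb F$ the corresponding term monad, $\mb K$ the free-group monad, and $\lambda:\mb F\to\mb K$ the evident evaluation. Then $\lambda^*$ embeds the category of groups as a \emph{full} subcategory of $\Set^{\mb F}$ (a map of $\Sigma$-algebras between groups preserves $\cdot$ and $e$, hence inverses), yet $\lambda_X$ is far from surjective: no $\Sigma$-term over $X$ hits $x^{-1}$. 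Your proposed witness for non-fullness --- the $\mb F$-subalgebra of $\mb TX$ generated by the image of $\delta_X$ --- does not refute fullness, because that subalgebra (here, the submonoid of the free group generated by $X$) need not carry a $\mb T$-algebra structure at all, so it is not an object in the image of $G$ and no morphism into or out of it is relevant to fullness of $G$.

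What rescues the lemma, and what your proposal never uses, is the hypothesis that $\mc K$ is a \emph{variety in the signature $\Sigma$}, i.e.\ an equational class, hence by Birkhoff closed under subalgebras and homomorphic images inside $\Set^{\mb F}$. (Groups do not form such a class over $\{\cdot,e\}$, which is why the counterexample above is consistent with the statement.) The paper's proof of this direction goes through exactly this: the image of $\delta_X:\mb FX\to\mb KX$ is a subalgebra of $\mb KX$ containing the generators, so closure under subalgebras together with the freeness of $\mb KX$ on $X$ within $\mc K$ forces the image to be all of $\mb KX$ --- alternatively one invokes the adjoint lifting theorem to make $\mc K$ reflective and argues from there. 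So your reduction of the converse to a purely formal property of the monadic-functor correspondence cannot succeed; the Birkhoff-type closure conditions are an essential, non-formal input.
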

\begin{proof}[Proof Sketch]
  We first show monad maps $\lambda : \mb F \surj \mb K$ that are component-wise surjective induces a full subcategory inclusion $\Set^\mb K \hook \Set^\mb F$, or more precisely, the inclusion functor is fully faithful and injective on objects. Given a $\mb K$-algebra $\alpha : \mb KX \to X$, according to the previously mentioned construction, the induced $\mb F$-algebra is given by
  \[
  \begin{tikzcd}
    \mb F X \ar[r, two heads, "\lambda_X"] & \mb KX \ar[r, "\alpha"] & X.
  \end{tikzcd}
  \]
  Since $\lambda_X$ is surjective, it is obvious that there are no $\mb K$-algebra structure on $X$ that gives out the same $\mb F$-algebra. Thus, the inclusion is injective on objects. It is easy to see that the inclusion is faithful. We only need to show it is full, i.e. in the following diagramme, if the outer square commutes then so does the right square
  \[
  \begin{tikzcd}
    \mb FX \ar[r, two heads, "\lambda_X"] \ar[d, "\mb Ff"] & \mb KX \ar[d, "\mb Kf"] \ar[r, "\alpha"] & X \ar[d, "f"] \\
    \mb FY \ar[r, "\lambda_Y"', two heads] & \mb KY \ar[r, "\beta"'] & Y
  \end{tikzcd}
  \]
  Notice that the left square commutes by naturality of $\lambda$. We finally observe that
  \[
  \lambda_X \cps \alpha \cps f = \mb Ff \cps \lambda_Y \cps \beta = \lambda_X \cps \mb Kf \cps \beta.
  \]
  This equation, together with the fact that $\lambda_X$ is surjective, shows that
  \[ \alpha \cps f = \mb Kf \cps \beta. \]
  Hence, the inclusion is also full.

  The other direction of the proof is more tricky. It will use the fact that our syntactic monad preserves surjections, and the fact that a variety of full subcategory of $\Set^\mb F$ is closed under forming products, subalgebras, and reflexive coequalisers, which ultimately relies on Birkhoff's theorem. Or we can use an adjoint lifting theorem described in \cite{johnstone1975adjointlifting}, showing that $\mc K$ would be a reflexive subcategory of $\Set^\mb F$. We refer the readers to \cite[p.~110]{manes2003monads} for a complete proof.
\end{proof}

We further show that the induced quotient map on monads is essentially providing us with the Lindenbaum–Tarski algebra construction, by observing the following fact:
\begin{proposition}
\label{prop:ltalge}
  For any set $X$ and any pair of formulas $\varphi \approx \psi \in \Eq X$,
  \[ \vDash_{\mc K,X} \varphi \approx \psi \eff \mb KX,\eta_{\mb K,X} \vDash_X \varphi \approx \psi, \]
  where $\mb KX$ is viewed as the free $\mb K$-algebra on $X$ included in $\Set^\mb F$.
\end{proposition}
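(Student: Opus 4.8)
The plan is to reduce both sides of the asserted equivalence to the single equation $\lambda_X(\varphi) = \lambda_X(\psi)$, where $\lambda : \mb F \surj \mb K$ is the quotient monad map corresponding to the variety $\mc K$ under Lemma~\ref{lem:quomonadfullsub}. The first step is to record the identity
\[ \ass\varphi^{\eta_{\mb K,X}} = \lambda_X(\varphi) \]
valid for every $\varphi \in \mb FX$, where the left-hand side is computed in $\mb KX$ equipped with the $\mb F$-algebra structure $\lambda^*\mu_{\mb K,X}$. By definition $\ass{(-)}^{\eta_{\mb K,X}}$ is the map $\qsi{\eta_{\mb K,X}} : \mb FX \to \mb KX$, the free $\mb F$-algebra extension of $\eta_{\mb K,X} : X \to \mb KX$; but this is precisely the $X$-component of the monad map $\delta : \mb F \to \mb K$ built from the monadic functor $\lambda^*$ in the discussion preceding Lemma~\ref{lem:quomonadfullsub}, and the functorial bijection between monad maps and monadic functors recalled there gives $\delta = \lambda$. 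Hence the right-hand side of the proposition, $\mb KX, \eta_{\mb K,X} \vDash_X \varphi \approx \psi$, says exactly $\lambda_X(\varphi) = \lambda_X(\psi)$.

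With this identity in hand, the direction $\Rightarrow$ is immediate, since $\pair{\mb KX,\eta_{\mb K,X}}$ is itself an algebraic model in $\mc K$. For $\Leftarrow$, let $\pair{\mbb S, e}$ be an arbitrary algebraic model in $\mc K$; by Lemma~\ref{lem:quomonadfullsub} its $\mb F$-algebra structure is $\lambda^*\beta$ for a unique $\mb K$-algebra $\beta : \mb K\mbb S \to \mbb S$. Let $g = \mb Ke \cps \beta : \mb KX \to \mbb S$ be the free $\mb K$-algebra extension of $e$, i.e. the unique $\mb K$-algebra morphism with $\eta_{\mb K,X} \cps g = e$. I claim $\qsi e = \lambda_X \cps g$. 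Indeed $\lambda_X = \qsi{\eta_{\mb K,X}}$ is an $\mb F$-algebra morphism from the free $\mb F$-algebra on $X$ into $\lambda^*\mu_{\mb K,X}$, and $\lambda^*g$ is an $\mb F$-algebra morphism $\lambda^*\mu_{\mb K,X} \to \lambda^*\beta$ whose underlying function is $g$; their composite is therefore an $\mb F$-algebra morphism $\mb FX \to (\mbb S,\lambda^*\beta)$, and it extends $e$ because $\eta_{\mb F,X} \cps \lambda_X = \eta_{\mb K,X}$ (the unit law for the monad map $\lambda$) and $\eta_{\mb K,X} \cps g = e$; uniqueness in the universal property of $\qsi e$ then forces $\qsi e = \lambda_X \cps g$. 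Consequently $\ass\varphi^e = g(\lambda_X(\varphi))$ and $\ass\psi^e = g(\lambda_X(\psi))$, so the hypothesis $\lambda_X(\varphi) = \lambda_X(\psi)$ yields $\ass\varphi^e = \ass\psi^e$. Since $\pair{\mbb S,e}$ was arbitrary, $\vDash_{\mc K,X} \varphi \approx \psi$ follows.

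I expect the only delicate point to be the bookkeeping in the factorisation $\qsi e = \lambda_X \cps g$ --- in particular, verifying that $\lambda_X$ really is an $\mb F$-algebra homomorphism $\mu_{\mb F,X} \to \lambda^*\mu_{\mb K,X}$, and that $\lambda^*$ leaves the underlying function of $g$ unchanged --- both of which are routine consequences of the naturality and monad-map axioms for $\lambda$ already used in the build-up to Lemma~\ref{lem:quomonadfullsub}. Everything else is a direct appeal to the universal property of free algebras, together with the key identity of the first step.
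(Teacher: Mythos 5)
Your proposal is correct and follows essentially the same route as the paper: both reduce the right-hand side to the equation $\lambda_X(\varphi)=\lambda_X(\psi)$ by identifying $\qsi{\eta_{\mb K,X}}$ with $\lambda_X$, and both establish the factorisation $\qsi e = \lambda_X \cps \ov e$ through the free $\mb K$-algebra (the paper calls this the ``initialness'' of $\lambda_X$ and verifies it by a diagram chase, where you invoke uniqueness in the universal property --- the same argument in different clothing). The only cosmetic difference is that you derive $\qsi{\eta_{\mb K,X}}=\lambda_X$ from the abstract monad-map/monadic-functor bijection, whereas the paper checks it directly from the monad-map axioms.
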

\begin{proof}
  The left to right direction is trivial. For the right to left direction, we show that $\lambda_X$ consists of an $\mb F$-algebra morphism from $\ms KX$ included into $\Set^\mb F$, to $\ms FX$; and it is the initial one among $\mc K$, i.e. $\mb F$-algebras that lies in the image of $\lambda^*$.

  We first observe that the following diagramme commutes by the fact that $\lambda$ commutes with multiplications of $\mb F$ and $\mb K$,
  \[
  \begin{tikzcd}[row sep = 1cm]
    \mb F \mb K X \ar[r, "\lambda_{\mb KX}"] & \mb K\mb KX \ar[r, "\mu_{\mb K,X}"] & \mb KX \\
    \mb F\mb F X \ar[u, "\mb F\lambda_{X}"] \ar[rr, "\mu_{\mb F, X}"'] \ar[ur, "(\lambda\circ\lambda)_X"'] & & \mb F X \ar[u, "\lambda_X"']
  \end{tikzcd}
  \]
  We also know
  \[ \eta_{\mb F,X} \cps \lambda_X = \eta_{\mb K,X}. \]
  Together they show that $\lambda_X$ is the uniquely induced $\mb F$-algebra morphism from the set map $\eta_{\mb K,X}$, since $\mb FX$ is free. We can also see this fact by direct computation,
  \[ \qsi{\eta_{\mb K,X}} = \mb F\eta_{\mb K,X} \cps \lambda_{\mb KX} \cps \mu_{\mb K,X} = \lambda_{X} \cps \eta_{\mb K,\mb KX} \cps \mu_{\mb K,X} = \lambda_X. \]

  Next we show the initialness of $\lambda_X$ of $\mb F$-algebra morphisms from $\mb FX$ to ones in $\mc K$. Explicitly, we show that for any $\mb F$-algebra $\mbb S$ of the form
  \[
  \begin{tikzcd}
    \mb F \mbb S \ar[r, "\lambda_\mbb S"] & \mb K\mbb S \ar[r, "\alpha"] & \mbb S
  \end{tikzcd},
  \]
  where $\alpha$ is a $\mb K$-algebra morphism, any $\mb F$-algebra homomorphism from $\mb FX$ to $\mbb S$ factors through $\lambda_X$:
  \[
  \begin{tikzcd}
    \mb F \mbb S \ar[r, "\lambda_\mbb S"] & \mb K\mbb S \ar[r, "\alpha"] & \mbb S \\
    \mb F \mb K X \ar[r, "\lambda_{\mb KX}"] \ar[u, "\mb F\ov e"] & \mb K\mb KX \ar[r, "\mu_{\mb K,X}"] \ar[u, "\mb K\ov e"] & \mb KX \ar[u, dashed, "\ov e"] \\
    \mb F\mb F X \ar[u, "\mb F\lambda_{X}"] \ar[rr, "\mu_{\mb F, X}"] & & \mb F X \ar[u, "\lambda_X"] \ar[uu, bend right, dashed, "\qsi e"']
  \end{tikzcd}
  \]
  For any set map $e : X \to \mbb S$ --- as previously mentioned, every $\mb F$-algebra homomorphism from $\ms FX$ to $\alpha$ is of the form $\qsi e$ for some $e$ since $\ms FX$ is free --- $\ov e$ is the uniquely induced $\mb K$-algebra morphism from $\ms KX$ to $\alpha$. The diagramme commutes because $\ov e$ is a $\mb K$-algebra homomorphism and due to the naturality of $\lambda$. By uniqueness of the induced map $\qsi e$, it then follows that $\qsi e$ factors through $\lambda_X$. We can also prove it by direct computation:
  \[ \qsi e = \mb Fe \cps \lambda_\mbb S \cps \alpha = \lambda_X \cps \mb Ke \cps \alpha = \lambda_X \cps \ov e. \]

  Finally, we suppose that
  \[ \mb KX,\eta_{\mb K,X} \vDash_X \varphi \approx \psi, \]
  or in other words
  \[ \lambda_X(\varphi) = \lambda_X(\psi). \]
  Then for any algebraic semantics $\pair{\mbb S,e}$ with $\mbb S$ an algebra in $\mc K$, the initialness of $\lambda_X$ implies
  \[ \qsi e(\varphi) = \ov e \circ \lambda_X(\varphi) = \ov e \circ \lambda_X(\psi) = \qsi e(\psi) \nt \mbb S,e \vDash_X \varphi \approx \psi. \]
  Thus, we have
  \[ \mb KX,\eta_{\mb K,X} \vDash_X \varphi \approx \psi \nt \operatorname\vDash_{\mc K,X} \varphi \approx \psi. \qedhere \]
\end{proof}

Proposition~\ref{prop:ltalge} then shows that the induced free $\mb K$-algebra $\ms KX$ with the evaluation $\eta_{\mb K,X}$ contains all the semantic information of the class $\mc K$ with respect to the set of formulas $\mb FX$. The evaluation map from $\mb FX$ to $\mb KX$ is precisely given by the $X$-component of the monad quotient map $\lambda$. Putting all these together then explicitly shows how the usual story of Lindenbaum-Tarski algebra can be recovered from our categorical setting. And by the previous Lemma~\ref{lem:quomonadfullsub}, given any such class of variety of algebras $\mc K$ serving as algebraic models and generating a structural consequence relation on $\ms P\Eq$, by general category theory there is a uniquely induced quotient monad $\lambda : \mb F \to \mb K$, that (1) on the functorial level it provides the construction of Lindenbaum-Tarski algebra, which is generally a \emph{quotient} of the syntactic term algebra; and (2) this quotient $\lambda_X$ on each component can then be viewed as evaluation map of our syntactic objects in $\mb FX$, which contains the full semantic information about the class $\mc K$. Such a setting is another incarnation of our general philosophy that logic is describing general ways of constructing quotients.

\section{Category of Consequence Relations}
\label{sec:equivconsrel}
In Section~\ref{sec:acrsup} we have identified structural consequence relations in its full generality as quotients of $\mc A$, with $\mc A$ being functors in $[\Set,\SupL]$. Section~\ref{sec:algseman} implements a concrete example of a consequence relation on the functor $\ms P\Eq$, with a given subclass of $\mb F$-algebras as providing semantics on pairs of formulas. However, when we study usual logics with an algebraic signature, as for propositional logic or other modal logics, the proof system we give does not generate consequence relations on $\ms P\Eq$, or $\mb P(\fml \times \fml)$ where the set of variables is fixed, but on $\ms P\mb F$ or $\mb P\fml$. However, viewing logics from a universal algebra perspective, especially the construction of Lindenbaum-Tarski algebras discussed in the previous section, has proven to be a very successful approach to study logical systems in general. And there is a precise notion of when such an algebraic study of logics is equivalent to the proof theoretic one. This is the notion of \emph{algebraisation} of logics, first systematically studied in \cite{blok1989algebraizable}.

Conceptually, a proof system, or a structural consequence relation on $\ms P\mb F$, is algebraisable if and only if it is equivalent --- in a sense we are going to state more precisely later --- to a consequence relation on $\ms P\Eq$ generated by a class of algebras as studied in the previous section. Hence, in the following texts we will first take up a general study of relations between different consequence relations. More specifically, we will investigate when a structural consequence relation is \emph{represented} or \emph{equivalent} to another one. In other words, we will be interested in the \emph{category of structural consequence relation}. Many of the results presented in this section is a direct generalisation of results obtained in \cite{galatos&tsinakis2009consequence}.

Now in Section~\ref{sec:acrsup}, Proposition~\ref{prop:funcbij} has provided us with a very nice identification, establishing that structural consequence relations corresponds bijectively to quotients in $[\Set,\SupL]$. We will heavily rely on such an identification in this section, since quotients are much more easy to describe categorically. We let $\Quo$ be the category of surjections in $[\Set,\SupL]$. Explicitly, objects in $\Quo$ are surjections $q : \mc A \surj \mc B$ in $[\Set,\SupL]$; a morphism from $q_1 : \mc A_1 \surj \mc B_1$ to $q_2 : \mc A_2 \surj \mc B_2$ is a pair of morphisms $(\tau,\rho)$ in $[\Set,\SupL]$ that makes the following diagramme commute,
\[
\begin{tikzcd}
  \mc A_1 \ar[r, "\tau"] \ar[d, two heads, "q_1"'] & \mc A_2 \ar[d, two heads, "q_2"] \\
  \mc B_1 \ar[r, "\rho"'] & \mc B_2
\end{tikzcd}
\]
A morphism in $\Quo$, in a sense, gives a way of translating the information of consequence relation $\cons$ induced by $q_1$ on $\mc A_1$ to the one $\conss$ induced by $q_2$ on $\mc A_2$. Suppose we are given such a morphism $(\tau,\rho)$, then for any set $X$ and any $x,y\in A_1X$, by definition we have
\begin{align*}
  x \cons_X y
  &\eff q_{1,X}^*(y) \le q_{1,X}^*(y), \\
  &\nt \rho_X^*(q_{1,X}^*(y)) \le \rho_X^*(q_{1,X}^*(x)), \\
  &\nt q_{2,X}^*(\tau_X^*(y)) \le q_{2,X}^*(\tau_X^*(x)), \\
  &\nt \tau_X^*(x) \conss_X \tau_X^*(y).
\end{align*}

Among all of such morphisms, we are in particularly interested in the case where it induces a \emph{faithful} translation. We say the consequence relation $\cons$ induced by $q_1$ is \emph{faithfully represented}, or simply \emph{represented}, by the consequence relation $\conss$ induced by $q_2$ along $(\tau,\rho)$, if for any set $X$ and any $x,y\in\mc AX$
\[ x \cons_X y \eff \tau^*_X(x) \conss_X \tau^*_X(y). \]
Below we give a precise characterisation of when a morphism in $\Quo$ induces a faithful representation.

Recall again that limits and colimits in a functor category like $[\Set,\SupL]$ are computed component-wise. In particular, what we have described as the image-factorisation of suplattices in Example~\ref{exp:imagefactor} can be almost seamlessly transported to the functorial case in $[\Set,\SupL]$. It turns out that representation of consequence relations has a close connection with image-factorisations.
\begin{lemma}
\label{lem:repinj}
  Let $(\tau,\rho)$ be a morphism in $\Quo$ from $q_1 : \mc A_1 \surj \mc B_1$ to $q_2 : \mc A_2 \surj \mc B_2$. Then it consists of a representation of $\cons$ by $\conss$ iff $q_1$ and $\rho$ make up of the image-factorisation of $\tau \cps q_2$, iff $\rho$ is injective.
\end{lemma}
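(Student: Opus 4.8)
The plan is to establish the three-way equivalence by proving the second and third conditions equivalent, and then the first and third equivalent, reducing throughout to componentwise statements. Since limits, colimits, and therefore injections in $[\Set,\SupL]$, as well as the (surjective, injective)-factorisation --- this last point being Example~\ref{exp:imagefactor} transported to the functorial setting --- are all computed componentwise, the assertion ``$\rho$ is injective'' means exactly that every component $\rho_X$, equivalently every underlying left adjoint $\rho_X^*\colon B_1X \to B_2X$, is injective.

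For the equivalence of the second and third conditions, I would use that commutativity of the defining square of $(\tau,\rho)$ gives $\tau \cps q_2 = q_1 \cps \rho$, in which $q_1$ is surjective because it is an object of $\Quo$. If $\rho$ is injective, this exhibits $\tau\cps q_2$ as a surjection followed by an injection, hence, by uniqueness of the image-factorisation, \emph{as} the image-factorisation; conversely, if $(q_1,\rho)$ is the image-factorisation then its injective factor $\rho$ is by definition injective.

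For the equivalence of the first and third conditions, I would unfold the two induced consequence relations via Lemma~\ref{lem:bijecpoint}. For $x,y\in A_1X$ one has $x \cons_X y \eff q_{1,X}^*(y) \le q_{1,X}^*(x)$, while, using the equality $q_{2,X}^*(\tau_X^*(z)) = \rho_X^*(q_{1,X}^*(z))$ obtained from the commuting square,
\[ \tau_X^*(x) \conss_X \tau_X^*(y) \eff q_{2,X}^*(\tau_X^*(y)) \le q_{2,X}^*(\tau_X^*(x)) \eff \rho_X^*(q_{1,X}^*(y)) \le \rho_X^*(q_{1,X}^*(x)). \]
Hence $(\tau,\rho)$ is a representation iff for every set $X$ and all $x,y\in A_1X$ one has $q_{1,X}^*(y) \le q_{1,X}^*(x) \eff \rho_X^*(q_{1,X}^*(y)) \le \rho_X^*(q_{1,X}^*(x))$. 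Since $q_{1,X}^*$ is surjective, as $x,y$ range over $A_1X$ the pairs $(q_{1,X}^*(x),q_{1,X}^*(y))$ exhaust $B_1X\times B_1X$, so this condition says precisely that $\rho_X^*$ is an order-embedding; and since $\rho_X^*$ is a left adjoint, being an order-embedding is equivalent to being injective (as recalled just before Lemma~\ref{lem:surinclus}). Quantifying over all $X$ then yields: $(\tau,\rho)$ is a representation iff every $\rho_X^*$ is injective iff $\rho$ is injective.

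The only step calling for any care is this last reduction: surjectivity of $q_{1,X}^*$ is exactly what upgrades a condition quantified over $x,y\in A_1X$ to a genuine order-embedding condition on all of $B_1X$. Everything else is routine bookkeeping with the commuting square together with the dictionary between quotients, closure operators and consequence relations already set up in Lemma~\ref{lem:bijecpoint}, so I do not anticipate a substantive obstacle.
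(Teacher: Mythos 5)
Your proposal is correct and follows essentially the same route as the paper's proof: both reduce the image-factorisation clause to the componentwise description of images, and both establish the representation--injectivity equivalence by unfolding the induced consequence relations via Lemma~\ref{lem:bijecpoint}, using the commuting square and the surjectivity of $q_{1,X}^*$ to identify the representation condition with $\rho_X^*$ being an order-embedding, hence injective. The only difference is presentational --- you run a single biconditional chain where the paper argues the two implications separately.
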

\begin{proof}
  From the component-wise description of images in $[\Set,\SupL]$ we know that $\mc B_2$ is the image of $\tau\cps q_2$ iff $\rho$ is injective. Suppose we have such a morphism with $\rho$ being injective. Let $X$ be any set and $x,y$ any elements in $\mc A_1X$. From the correspondence between consequence relations and quotients, we have
  \[ x \cons_{X} y \eff q_{1,X}^*(y) \le q_{1,X}^*(x). \]
  Since $\rho$ is injective, in particular $\rho_X^*$ is injective for any set $X$, this means that $\rho_X^*$ is a poset-embedding of $\mc B_1X$ into $\mc B_2X$. As a result, for any $a,b\in\mc B_1X$, $a \le b$ if and only if $\rho_X^*(a) \le \rho_X^*(b)$. We can then further compute
  \begin{align*}
    x \cons_{X} y
    &\eff q_{1,X}^*(y) \le q_{1,X}^*(x), \\
    &\eff \rho^*_Xq_{1,X}^*(y) \le \rho^*_Xq_{1,X}^*(x), \\
    &\eff q_{2,X}^*\tau_X^*(y) \le q_{2,X}^*\tau_X^*(x), \\
    &\eff \tau_X^*(x) \conss_X \tau_X^*(y).
  \end{align*}
  The first and last equivalence follows from the correspondence between quotients and consequence relations; the second equivalence follows from the fact that each $\rho_X^*$ is a poset-embedding; the third equivalence is a result of the commuting diagramme in the definition of morphisms in $\Quo$.

  On the other hand, suppose $\cons$ is faithfully represented by $\conss$ in $(\tau,\rho)$. We need to show that $\rho$ is injective, which is equivalent to show that $\rho_X^*$ is injective for any set $X$. First, since $q_1$ is a surjection, it follows that every element in $\mc B_1X$ has the form $q_{1,X}^*(x)$ for some $x\in \mc A_1X$. We know that for any $x,y\in\mc A_1X$
  \[ \tau_X^*(x) \conss_{X} \tau_X^*(y) \nt x \cons_{X} y. \]
  From previous computations we already know that
  \[ \tau_X^*(x) \conss_{X} \tau_X^*(y) \eff \rho_X^*q_{1,X}^*(y) \le \rho_X^*q_{1,X}^*(x); \]
  \[ x \cons_{X} y \eff q_{1,X}^*(y) \le q_{1,X}^*(x). \]
  It then follows that
  \[ \rho_{X}^*q_{1,X}^*(y) \le \rho_{X}^*q_{1,X}^*(x) \nt q_{1,X}^*(y) \le q_{1,X}^*(x), \]
  which, together with the fact that $q_{1,X}^*$ is surjective, implies that $\rho_X^*$ is a poset-embedding, thus injective.
\end{proof}

Now intuitively, two consequence relations $\cons,\conss$ are equivalent if, and only if each one of them can be faithfully represented by the other. According to the above result, this in particular means that we must have a morphism $(\tau,\rho)$ in $\Quo$ from $q_1$ to $q_2$ where $\rho$ is injective, and another morphism $(\delta,\gamma)$ from $q_2$ to $q_1$ with $\gamma$ being injective. Set-theoretically, we know that if there are two-sided injections between two sets then they are isomorphic. Lifting this point-wise result to our category $[\Set,\SupL]$ shows that $\mc B_1$ and $\mc B_2$ must be isomorphic either. Hence, we define an \emph{equivalence} between two consequence relations as a tuple $(\tau,\delta,\rho)$ with $\rho$ being an isomorphism, such that in the below diagramme
\[
\begin{tikzcd}[row sep = 0.9cm]
  \mc A_1 \ar[r, bend left, "\tau"] \ar[d, two heads, "q_1"'] & \mc A_2 \ar[d, two heads, "q_2"] \ar[l, bend left, "\delta"] \\
  \mc B_1 \ar[r, bend left, "\rho"] & \mc B_2 \ar[l, bend left, "\rho\inv"]
\end{tikzcd}
\]
$(\tau,\rho)$ exhibits a representation of $\cons$ by $\conss$, and $(\delta,\rho\inv)$ exhibits a representation of $\conss$ by $\cons$. We have the following characterisation of equivalences between consequence relations.
\begin{lemma}
\label{lem:equivcons}
  Given two quotients $q_1: \mc A_1 \surj \mc B_1$ and $q_2:\mc A_2 \surj \mc B_2$, two morphisms $\tau : \mc A_1 \gb \mc A_2 : \delta$ induce an equivalence between the two consequence relations iff one of the two conditions holds:
  \begin{enumerate}
    \item $q_1$ exhibits $\mc B_1$ as the image of $\tau\cps q_2$ and $\delta\cps\tau\cps q_2 = q_2$;
    \item $q_2$ exhibits $\mc B_2$ as the image of $\delta\cps q_1$ and $\tau \cps \delta \cps q_1 = q_1$.
  \end{enumerate}
\end{lemma}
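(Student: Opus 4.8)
The plan is to reduce the claimed biconditional to the already-proved characterisation of representations in Lemma~\ref{lem:repinj}, then handle the extra "two-sided'' data. Recall that by definition an equivalence is a triple $(\tau,\delta,\rho)$ with $\rho$ an isomorphism, such that $(\tau,\rho)$ represents $\cons$ by $\conss$ and $(\delta,\rho\inv)$ represents $\conss$ by $\cons$; in the statement of the lemma we are only given $\tau,\delta$, so the first order of business is to observe that the $\rho$ part of an equivalence, if it exists, is \emph{forced}. Indeed, since $q_1,q_2$ are surjections, any $\rho$ with $\rho\cps q_1=q_2\cps\ldots$ — more precisely, any $\rho$ fitting into a $\Quo$-morphism $(\tau,\rho)$ — is uniquely determined by $\tau$ and $q_1,q_2$ (it is the unique factorisation of $\tau\cps q_2$ through the epi $q_1$), and likewise $\gamma$ is determined by $\delta$. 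So the real content is: \emph{when does the pair $\tau,\delta$ admit such induced $\rho,\gamma$ with $\rho$ injective, $\gamma$ injective, and $\gamma=\rho\inv$?}

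Next I would run the following chain of equivalences. By Lemma~\ref{lem:repinj}, $(\tau,\rho)$ represents $\cons$ by $\conss$ iff $q_1$ exhibits $\mc B_1$ as the image of $\tau\cps q_2$ (equivalently $\rho$ is injective); symmetrically $(\delta,\gamma)$ represents $\conss$ by $\cons$ iff $q_2$ exhibits $\mc B_2$ as the image of $\delta\cps q_1$ (equivalently $\gamma$ injective). So an equivalence amounts to: $q_1$ is the image of $\tau\cps q_2$, $q_2$ is the image of $\delta\cps q_1$, \emph{and} the two induced injections $\rho:\mc B_1\to\mc B_2$, $\gamma:\mc B_2\to\mc B_1$ are mutually inverse. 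The key simplification is that once $\rho$ and $\gamma$ are both injective suplattice morphisms (component-wise, using that these properties are detected component-wise in $[\Set,\SupL]$ as noted in the excerpt), the composite $\gamma\cps\rho:\mc B_1\to\mc B_1$ is an injective endomorphism that one checks equals the identity — here I would use that $q_1$ is epi together with the identities $\rho\cps q_1 = q_2\cps(\text{the map induced by }\tau)$ and the dual one, so $\gamma\cps\rho\cps q_1 = q_1 = \id\cps q_1$ and epicness cancels $q_1$; similarly $\rho\cps\gamma=\id$. Thus \emph{automatically} $\gamma=\rho\inv$, so the isomorphism condition on $\rho$ is not an extra hypothesis but a consequence of the two image conditions holding simultaneously. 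Translating "$\delta\cps q_1$ has image $q_2$'' via the commuting square: the induced map $\gamma\cps\ q_2$-side identity is exactly "$\delta\cps q_1$ factors through $q_2$'', i.e. $\tau\cps\delta\cps q_1 = q_1$ after composing the $\Quo$-square for $\delta$; writing this out gives condition (2), and symmetrically condition (1).

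It then remains to see why it suffices to assume \emph{only one} of (1), (2) — i.e. that (1) $\Rightarrow$ (2). This is the step I expect to be the main obstacle. Assume (1): $q_1$ is the image of $\tau\cps q_2$ (so $\rho$ is a component-wise injection) and $\delta\cps\tau\cps q_2 = q_2$. From $\delta\cps\tau\cps q_2=q_2$ and surjectivity of $q_2$ I would first extract that the composite $\mc A_2\xrightarrow{\tau}\mc A_1\xrightarrow{\delta}\mc A_2$, post-composed with $q_2$, is $q_2$; combined with the factorisation of $\tau\cps q_2$ through $q_1$ and of $\delta\cps q_1$ through some quotient, I want to conclude that $\delta\cps q_1$ has image exactly $\mc B_2$ and that $\tau\cps(\delta\cps q_1) = q_1$. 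The heart of it is a diagram chase using: $q_1$ epi, $q_2$ epi, the $\Quo$-square for $\tau$ (giving $\rho$ with $\rho\cps q_1 = $ the $\tau$-transform of $q_2$), and the hypothesis $\delta\cps\tau\cps q_2=q_2$, to build the inverse $\gamma$ to $\rho$ explicitly and then read off (2) as the statement that $(\delta,\gamma)=(\delta,\rho\inv)$ fits a $\Quo$-square. The symmetric argument gives (2) $\Rightarrow$ (1). Everywhere the suplattice-specific input is only that injections/surjections and the image factorisation are computed component-wise (Example~\ref{exp:imagefactor} and the remarks following Proposition~\ref{prop:funcbij}), so modulo that the argument is a formal chase with epis and image factorisations.
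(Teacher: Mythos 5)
Your overall route coincides with the paper's: reduce everything to Lemma~\ref{lem:repinj}, note that $\rho$ (resp.\ $\gamma$) is forced as the unique factorisation of $\tau\cps q_2$ (resp.\ $\delta\cps q_1$) through the epimorphism $q_1$ (resp.\ $q_2$), and use the identities $\delta\cps\tau\cps q_2=q_2$ and $\tau\cps\delta\cps q_1=q_1$ together with epi-cancellation to see that the two induced injections are mutually inverse. The forward direction and the ``both (1) and (2) imply equivalence'' direction are therefore sound, modulo some garbled composites. One caveat: mutual inverseness of $\rho$ and $\gamma$ is \emph{not} a consequence of their injectivity alone, as you suggest at one point; it genuinely requires the second halves of (1) and (2). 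Since you do invoke those identities in the cancellation, this is a presentational slip rather than an error.

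The genuine gap is the step you yourself flag as ``the main obstacle'': showing that condition (1) \emph{alone} suffices. You state what you ``want to conclude'' and defer to an unspecified diagram chase, but this is exactly where the content of the lemma lies, and it does not follow from formal epi-cancellation --- you must actually establish that $\rho$ is surjective. The missing computation is short: given $y\in\mc B_2X$, pick $x\in\mc A_2X$ with $q_{2,X}(x)=y$ by surjectivity of $q_2$; then
\[ \rho_X(q_{1,X}(\delta_X(x))) = q_{2,X}(\tau_X(\delta_X(x))) = q_{2,X}(x) = y, \]
where the first equality is $q_1\cps\rho=\tau\cps q_2$ and the second is the hypothesis $\delta\cps\tau\cps q_2=q_2$. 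Hence $\rho$ is surjective as well as injective, so an isomorphism, and then
\[ q_2\cps\rho\inv = \delta\cps\tau\cps q_2\cps\rho\inv = \delta\cps q_1\cps\rho\cps\rho\inv = \delta\cps q_1 \]
shows that $(\delta,\rho\inv)$ is a morphism in $\Quo$, yielding the equivalence. Without this (or an equivalent) computation the proof is incomplete.
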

\begin{proof}
  It is easy to see that an equivalence between the two induced consequence relations implies both (1) and (2). Suppose (1) holds. If $q_1$ exhibits $\mc B_1$ as the image of $\tau\cps q_2$, we would then have an injective morphism $\rho : \mc B_1 \to \mc B_2$ such that
  \[ q_1 \cps \rho = \tau \cps q_2. \]
  To show $\rho$ is an isomorphism we then only need to show it is also surjective. For any set $X$, let $y$ be an element in $\mc B_2X$. Since $q_2$ is surjective, we choose $x\in\mc A_2X$ that $q_{2,X}(x) = y$. We can then compute
  \[ \rho_X(q_{1,X}(\delta_X(x))) = q_{2,X}(\tau_X(\delta_X(x))) = q_{2,X}(x) = y. \]
  The first equality holds by the fact that $q_1$ and $\rho$ is the image-factorisation of $\tau\cps q_2$; the second holds because by assumption $\delta\cps\tau\cps q_2 = q_2$. It then follows that $\rho$ is indeed surjective, hence isomorphic. Finally, we observe
  \[ q_2 \cps \rho\inv = \delta \cps \tau \cps q_2 \cps \rho\inv = \delta \cps q_2 \cps \rho \cps \rho\inv = \delta \cps q_2. \]
  This finally shows that $(\delta,\rho\inv)$ is also a morphism in $\Quo$, thus $(\tau,\delta,\rho)$ is indeed an equivalence of consequence relations. The proof of (2) is completely similar.
\end{proof}

In terms of consequence relations, we have the following result.
\begin{corollary}
\label{cor:equivcons}
  A pair of morphisms $\tau : \mc A_1 \gb \mc A_2 : \delta$ induces an equivalence between the two consequence relations $\cons,\conss$ iff the following conditions hold: For any set $X$,
  \begin{itemize}
    \item for any $x,y\in A_1X$, $x \cons_X y \eff \tau_X^*(x) \conss_X \tau_X^*(y)$;
    \item for any $z\in A_2X$, $z \cceff \tau_X^*(\delta_X^*(z))$.
  \end{itemize}
  Or equivalently, iff for any set $X$ the following conditions hold:
  \begin{itemize}
    \item for any $x,y\in A_2X$, $x \conss_X y \eff \delta_X^*(x) \cons_X \delta^*_X(y)$;
    \item for any $z\in A_1X$, $z \ceff \delta_X^*(\tau_X^*(z))$.
  \end{itemize}
\end{corollary}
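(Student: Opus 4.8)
The plan is to deduce the corollary directly from Lemma~\ref{lem:equivcons} by rewriting its two conditions in the language of consequence relations, using the dictionary of Proposition~\ref{prop:funcbij} together with Lemma~\ref{lem:bijecpoint}: for every set $X$ the consequence relation $\cons_X$ and the component $q_{1,X}^*$ determine one another via $x \cons_X y \eff q_{1,X}^*(y) \le q_{1,X}^*(x)$, and likewise for $\conss_X$ and $q_{2,X}^*$; in particular $q_{1,X}^*(x) = q_{1,X}^*(y)$ iff $x \ceff y$, and $q_{2,X}^*(x) = q_{2,X}^*(y)$ iff $x \cceff y$. Lemma~\ref{lem:equivcons} already tells us that an equivalence of the two consequence relations is the same thing as its condition~(1) and the same thing as its condition~(2) --- an equivalence implies both, and either one implies an equivalence --- so it suffices to show that condition~(1) is equivalent to the first bulleted list and condition~(2) to the second; the two lists then follow to be mutually equivalent.

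For condition~(1), I would first note that ``$q_1$ exhibits $\mc B_1$ as the image of $\tau \cps q_2$'' presupposes that $\tau \cps q_2$ factors through $q_1$, and that the availability of such a factoring is exactly the forward half of the first bulleted biconditional. Indeed, for each $X$ the kernel pair of the surjection $q_{1,X}^*$ consists of the pairs $(x,y)$ with $x \ceff y$; applying the implication $x \cons_X y \Rightarrow \tau_X^*(x) \conss_X \tau_X^*(y)$ to both $(x,y)$ and $(y,x)$ shows that $\tau_X^* \cps q_{2,X}^*$ sends such a pair to a single value, so --- since $q_{1,X}^*$ is a regular epimorphism of suplattices (Example~\ref{exp:imagefactor}) and morphisms, hence factorisations, in $[\Set,\SupL]$ are computed componentwise --- there is a unique $\rho : \mc B_1 \to \mc B_2$ in $[\Set,\SupL]$ with $q_1 \cps \rho = \tau \cps q_2$, i.e.\ $(\tau,\rho)$ is a morphism in $\Quo$. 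Lemma~\ref{lem:repinj} then identifies ``$\rho$ is injective'', ``$q_1$ and $\rho$ form the image-factorisation of $\tau \cps q_2$'' and the full first bulleted biconditional with one another, so ``$q_1$ exhibits $\mc B_1$ as the image of $\tau \cps q_2$'' is equivalent to that biconditional. For the remaining clause, $\delta \cps \tau \cps q_2 = q_2$ holds iff its two sides agree in every component, i.e.\ $q_{2,X}^*(\tau_X^*(\delta_X^*(z))) = q_{2,X}^*(z)$ for every set $X$ and $z \in A_2X$, which by the dictionary is precisely $z \cceff \tau_X^*(\delta_X^*(z))$. Hence condition~(1) is equivalent to the first list.

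Condition~(2) is handled symmetrically, interchanging $q_1 \leftrightarrow q_2$, $\tau \leftrightarrow \delta$ and $\cons \leftrightarrow \conss$ throughout: ``$q_2$ exhibits $\mc B_2$ as the image of $\delta \cps q_1$'' becomes the biconditional $x \conss_X y \eff \delta_X^*(x) \cons_X \delta_X^*(y)$, and $\tau \cps \delta \cps q_1 = q_1$ becomes $z \ceff \delta_X^*(\tau_X^*(z))$ for all sets $X$ and $z \in A_1X$. Thus condition~(2) is equivalent to the second list, and with Lemma~\ref{lem:equivcons} the corollary follows. The only step that is more than routine substitution into the dictionary --- and the one I expect to be the main obstacle --- is keeping the correspondence tight in both directions: making sure the mediating morphism $\rho$ (and, on the other side, its inverse) is genuinely \emph{produced} by the stated consequence-relation conditions rather than silently assumed, which is where the kernel-pair computation above and the componentwise image-factorisation in $[\Set,\SupL]$ (the functorial transport of Example~\ref{exp:imagefactor}) do the work. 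Everything else is bookkeeping with the dictionary.
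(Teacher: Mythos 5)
Your proposal is correct and follows essentially the same route as the paper: translate the first bulleted list into the image-factorisation condition of Lemma~\ref{lem:repinj} and the second into the componentwise statement of $\delta \cps \tau \cps q_2 = q_2$, then invoke Lemma~\ref{lem:equivcons}. The one place you go beyond the paper's own (terser) argument is in explicitly producing the mediating morphism $\rho$ via the kernel-pair computation before applying Lemma~\ref{lem:repinj} --- a step the paper leaves implicit --- and that addition is sound, since surjections of suplattices are effective epimorphisms and factorisations in $[\Set,\SupL]$ are computed componentwise.
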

\begin{proof}
  Similarly we only prove the result for the first set of conditions. From Lemma~\ref{lem:repinj} we know that $x \cons_X y \eff \tau_X^*(x) \conss \tau_X^*(y)$ implies that $\mc B_1$ is the image of $\tau \cps q_2$. On the other hand, for any $z \in A_2X$, $z \cceff \tau_X^*(\delta_X^*(z))$ simply means that
  \[ q_{2,X}^*(z) = q_{2,X}^*(\tau_X^*(\delta_X^*(z))), \]
  which shows $q_2 = \delta \cps \tau \cps q_2$. By Lemma~\ref{lem:equivcons} we then know $\tau,\delta$ induce an equivalence of consequence relations.
\end{proof}

Corollary~\ref{cor:equivcons} has shown us that given an equivalence between consequence relations $\cons$ and $\conss$, what we essentially have is two translations $\tau,\delta$ that induce faithful representations of one consequence relation by another, and vice versa. Furthermore, if you translate something back and force along $\tau$ and $\delta$, the result you get would be the same as the original element you begin with relative to the consequence relation. This has nicely summarised what we want for two equivalence relations to be equivalent in a precise technical sense.

In particular, when the consequence relation $\cons$ on $\ms P\mb F$ generated by a proof system is equivalent to the consequence relation $\vDash_\mc K$ on $\ms P\Eq$ generated by a subclass of $\mb F$-algebras $\mc K$, we say the logic on $\ms P\mb F$ is \emph{algebraisable}, and it is \emph{algebraised by $\mc K$}. There are already many examples of algebraisable systems presented in the literature, see \cite{blok1989algebraizable} for instance. Most of these examples, though not formulated in our categorical framework, can be easily seen to still be algebraisable in our extended sense. We do not further pursue any concrete examples here, but end with a discussion of how the fore mentioned Lindenbaum-Tarski algebra construction understood in our categorical framework can be seen to induce an algebraisation of certain logics.

\section{Projective Objects and Algebraisation}
In this section we provide a detailed study of an important theorem proved in \cite{blok1989algebraizable} that characterises when algebraisation is available for general concrete logical systems. The theorem says that a concrete logical system with consequence relation $\cons$ is algebraisable by some class of algebras, with the induced consequence relation $\vDash$ on pairs of formulas, if, and only if the induced lattice of theories $\mr{Th}_{\cons}$ is isomorphic to the lattice of theories $\mr{Th}_\vDash$, such that the isomorphism commutes with inverse substitutions. Our approach extends \cite{galatos&tsinakis2009consequence} as a more general categorical study of the characterisation of algebraisation, and provides a more general result.

Let's first recall the definition of a projective objects in a category. In any category $\catC$, $P$ is projective if and only if for any epimorphism $q : A \surj B$ and any morphism $s : P \to B$, there exists a lift of $s$ along $q$,
\[
\begin{tikzcd}
  & A \ar[d, two heads, "q"] \\
  P \ar[ur, dashed, "\exists"] \ar[r, "s"'] & B
\end{tikzcd}
\]
making the above diagramme commute. In other words, the Hom-functor $\catC(P,-)$ preserves epimorphisms. The fact that $\ms P$ is the free suplattice functor plays an essential role in the following result.
\begin{lemma}
\label{lem:projsuplp}
  The functor $\ms P\circ (-)$ from $[\Set,\Set]$ to $[\Set,\SupL]$ preserves projective elements. In other worlds, for any endo-functor $T : \Set \to \Set$, if $T$ is projective in $[\Set,\Set]$ then $\ms PT$ is projective in $[\Set,\SupL]$.
\end{lemma}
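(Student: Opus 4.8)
The plan is to recognise $\ms P\circ(-)$ as a left adjoint and then invoke the general principle that a left adjoint preserves projective objects whenever its right adjoint preserves the class of epimorphisms (here: surjections) against which projectivity is tested. So the work splits into three pieces: produce the adjunction, prove the general principle, and check that the right adjoint preserves surjections.

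First I would observe that post-composition with functors preserves adjunctions. Since $\ms P$ is left adjoint to the forgetful functor $U_\mb P$, this yields an adjunction
\[ \ms P\circ(-) : [\Set,\Set] \gb [\Set,\SupL] : U_\mb P\circ(-). \]
Concretely, a natural transformation $\ms PT\to\mc A$ is, componentwise, a family of suplattice morphisms $\ms P(TX)\to\mc AX$, and transposing each component across $\ms P\dashv U_\mb P$ produces a natural transformation $T\to U_\mb P\mc A = A$; naturality in $X$ is inherited from the naturality of the transposition bijection. Hence $\ms P\circ(-)$ is a left adjoint, with right adjoint $U_\mb P\circ(-)$.

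Next I would record the following standard fact together with its transpose-chasing proof: if $L\dashv R$ and $R$ sends surjections to surjections, then $L$ sends projective objects to projective objects. Given $T$ projective in $[\Set,\Set]$, a surjection $q:\mc A\surj\mc B$ in $[\Set,\SupL]$, and a morphism $s:\ms PT\to\mc B$, transpose $s$ to $\tilde s:T\to U_\mb P\mc B$; since $U_\mb P q:U_\mb P\mc A\to U_\mb P\mc B$ is then a surjection in $[\Set,\Set]$, projectivity of $T$ furnishes $\tilde t:T\to U_\mb P\mc A$ whose composite with $U_\mb P q$ is $\tilde s$; transposing $\tilde t$ back gives $t:\ms PT\to\mc A$, and by naturality of the adjunction bijection in the $\SupL$-variable the transpose of the composite $\ms PT\xrightarrow{t}\mc A\xrightarrow{q}\mc B$ is exactly the composite of $\tilde t$ with $U_\mb P q$, namely $\tilde s$; since transposition is a bijection, the composite of $t$ with $q$ is $s$, so $t$ is the desired lift and $\ms PT$ is projective.

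It then remains to check that $U_\mb P\circ(-)$ preserves surjections, which is where the only genuine content sits, and also the ``main obstacle'' — mild as it is. Surjections in $[\Set,\SupL]$ are precisely the pointwise surjections (as already noted, quotients in this functor category are componentwise quotients), and the underlying function of a surjective suplattice homomorphism is surjective, so $U_\mb P:\SupL\to\Set$ preserves surjections and hence so does $U_\mb P\circ(-)$. One should also remark that by Lemma~\ref{lem:surinclus} — a suplattice morphism is surjective iff its right adjoint is injective — equivalently by the self-duality of $\SupL$, surjections and categorical epimorphisms coincide in $\SupL$, so the argument is insensitive to which reading of ``epimorphism'' one adopts in the definition of a projective object. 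Combining the three pieces proves the lemma; everything beyond the elementary observation that $U_\mb P$ preserves surjections is formal adjunction bookkeeping.
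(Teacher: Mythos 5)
Your proof is correct, and it takes a more abstract route than the one in the paper. The paper argues by hand: it lifts $\eta_\mb P\circ\id_T\cps U_\mb Pa$ along $U_\mb Pq$ using projectivity of $T$, then explicitly builds the suplattice-level lift as $\qsi{l_X}=\mb Pl_X\cps A_X$ from the free $\mb P$-algebra structure, and verifies naturality and the lifting identity by a chain of equations using the monad laws. You package exactly this computation into two general facts: post-composition with the adjunction $\ms P : \Set \gb \SupL : U_\mb P$ yields an adjunction $\ms P\circ(-) : [\Set,\Set] \gb [\Set,\SupL] : U_\mb P\circ(-)$, and a left adjoint preserves projectives whenever its right adjoint preserves the relevant epimorphisms --- your $\tilde t$ and its transpose are precisely the paper's $l$ and $\qsi l$, and the equational verifications in the paper are the naturality of the transposition bijection in your argument. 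What your version buys is brevity, a clear isolation of the only nontrivial input (that $U_\mb P$ carries surjections of suplattices to surjections of sets, hence componentwise in the functor categories), and immediate generality: the same argument works verbatim for the free-algebra functor of any monad on $\Set$ whose forgetful functor preserves the epimorphisms against which projectivity is tested. What the paper's version buys is an explicit formula for the lift, which it can reuse in later computations, and independence from having to set up the lifted adjunction between functor categories. Your closing remark that surjections and epimorphisms coincide in $\SupL$ (via Lemma~\ref{lem:surinclus} and the self-duality of $\SupL$) is a useful clarification that the paper leaves implicit, since its definition of projectivity is stated for epimorphisms but its proofs quantify over surjections.
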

\begin{proof}
  Suppose $T$ is projective in $[\Set,\Set]$, and $q : \mc A \surj \mc B$ is a surjection in $[\Set,\SupL]$. Let $a$ be any morphism $a : \ms PT \to \mc B$. We know that $U_\mb Pq : A \surj B$ is a surjection in $\Set$; since $T$ is projective, it follows that there exists $l$ making the following diagramme commute,
  \[
  \begin{tikzcd}
    T \ar[r, "l"] \ar[d, "\eta_\mb P\circ\id_T"'] & A \ar[d, two heads, "U_\mb Pq"] \\
    \mb PT \ar[r, "U_\mb Pa"'] & B
  \end{tikzcd}
  \]
  Now we know that $\ms P$ is the free suplattice functor. Hence, for any set $X$, the function $l_X : TX \to AX$ will induces a unique suplattice morphism
  \[ \qsi{l_X} = \mb Pl_X \cps A_X : \mb PTX \to AX, \]
  such that $A_X : \mb PAX \to AX$ is the $\mb P$-algebra structure corresponding to the suplattice $\mc AX$. We first show the naturality of this construction. For any function $f : X \to Y$,
  \[ \qsi{l_X} \cps Af = \mb Pl_X \cps A_X \cps Af = \mb Pl_X \cps \mb PAf \cps A_Y = \mb PTf \cps \mb Pl_Y \cps A_Y = \mb PTf \cps \qsi{l_Y}. \]
  We have the second equality because $Af$ is a $\mb P$-algebra morphism; the third equality is due to naturality of $l$. It follows that we then have a natural transformation $\qsi l : \mb PT \to A$, such that each component $\qsi l_X$ is given by $\qsi{l_X}$, and $\qsi l_X$ is furthermore a suplattice morphism. It then follows that we now have a morphism in $[\Set,\SupL]$, whose underlying functor is $\qsi l$. We finally need to verify that $\qsi l$ is indeed a lift of $a$, and we can verify this at the level of underlying set. According to our definition, for any set $X$ we have
  \begin{align*}
    \qsi l_X \cps (U_\mb Pq)_X 
    &= \mb Pl_X \cps A_X \cps (U_\mb Pq)_X \\
    &= \mb Pl_X \cps \mb P(U_\mb Pq)_X \cps B_X \\
    &= \mb P\eta_{\mb P,TX} \cps \mb P(U_\mb Pa)_X \cps B_X \\
    &= \mb P\eta_{\mb P,TX} \cps \mu_{\mb P,X} \cps (U_\mb Pa)_X \\
    &= (U_\mb Pa)_X
  \end{align*}
  The first equality holds by definition of $\qsi l_X$; the second holds due to the fact that $q_X$ is a morphism between suplattices; the third holds because $l$ is a lifting of $\eta_\mb P \circ \id_T \cps U_\mb Pa$ along $U_\mb Pq$; the fourth is again because $a_X$ is a morphism between suplattices and the $\mb P$-algebra structure on the free suplattice $\ms PT$ is given by $\mu_{\mb P,X}$; the final equality holds due to the triangular identity of the power set monad. Such a computation shows that we indeed have a lift of $a$ along $q$. Thus, $\ms PT$ is also projective in $[\Set,\SupL]$.
\end{proof}
Lemma~\ref{lem:projsuplp} implies that to show a functor $\ms PT$ is projective in $[\Set,\SupL]$, it is enough to show that $T$ is projective in $[\Set,\Set]$. A very important class of projective objects in $[\Set,\Set]$ are polynomial functors. It is well-known in the literature that polynomial functors are projective. For sake of completeness we provide the proof here. We first show that representable functors are projective.
\begin{lemma}
\label{lem:repreproj}
  Any representable functor $\yon^X$ in $[\Set,\Set]$ for any set $X$ is projective.
\end{lemma}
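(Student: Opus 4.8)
The plan is to read projectivity of $\yon^X$ directly off the Yoneda lemma. First I would recall that for every functor $G : \Set \to \Set$ the Yoneda lemma gives a bijection
\[ \mathrm{Nat}(\yon^X, G) \;\cong\; GX, \qquad s \longmapsto s_X(\id_X), \]
and that this bijection is natural in $G$; hence it identifies the Hom-functor $[\Set,\Set](\yon^X,-)$ with the evaluation-at-$X$ functor $\mathrm{ev}_X : [\Set,\Set] \to \Set$, $G \mapsto GX$. By the definition of projectivity recalled above, $\yon^X$ is therefore projective as soon as $\mathrm{ev}_X$ preserves epimorphisms.

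Next I would identify the epimorphisms of $[\Set,\Set]$. Since colimits in a functor category are computed component-wise --- a fact already used in the excerpt --- a natural transformation $q : \mc A \to \mc B$ is an epimorphism precisely when each component $q_X : \mc AX \to \mc BX$ is surjective; in particular $\mathrm{ev}_X(q) = q_X$ is then surjective, so $\mathrm{ev}_X$ preserves epimorphisms and the lemma follows. To spell out the lift concretely, given an epimorphism $q : \mc A \surj \mc B$ in $[\Set,\Set]$ and a natural transformation $s : \yon^X \to \mc B$, I would put $b := s_X(\id_X) \in \mc BX$, choose $a \in \mc AX$ with $q_X(a) = b$ (possible since $q_X$ is surjective), and let $\ell : \yon^X \to \mc A$ be the natural transformation with $\ell_X(\id_X) = a$ furnished by Yoneda. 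Then $\ell \cps q$ and $s$ are natural transformations $\yon^X \to \mc B$ that agree on $\id_X$, since $(\ell \cps q)_X(\id_X) = q_X(\ell_X(\id_X)) = q_X(a) = b = s_X(\id_X)$, so $\ell \cps q = s$, exhibiting $\ell$ as the desired lift.

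I do not expect any real obstacle here. The only point deserving a word of justification is the identification of the epimorphisms of $[\Set,\Set]$ with the component-wise surjections, and this is immediate from the component-wise computation of colimits in the functor category already invoked in the paper.
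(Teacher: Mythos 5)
Your proposal is correct and follows essentially the same route as the paper: apply the Yoneda lemma to identify maps out of $\yon^X$ with elements of the value at $X$, lift the element $s_X(\id_X)$ through the surjective component $q_X$, and invoke Yoneda again to see the resulting map is a lift. The only addition is your explicit justification that epimorphisms in $[\Set,\Set]$ are the component-wise surjections, which the paper leaves implicit.
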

\begin{proof}
  By Yoneda lemma, a morphism $a : \yon^X \to A$ is the same as an element in $A(X)$. Given any surjection $q : A \surj B$ and any morphism $s : \yon^X \to B$, or equivalently $s \in B(X)$, by surjectivity we can find an element $t \in A(X)$ such that $q_X(t) = s$. By Yoneda again, it follows that the following diagramme commutes,
  \[
  \begin{tikzcd}
    & A \ar[d, "q"] \\
    \yon^X \ar[ur, "t"] \ar[r, "s"'] & B 
  \end{tikzcd}
  \]
  Thus, $\yon^X$ is projective.
\end{proof}

It is also well-known that arbitrary coproducts of projective objects is again projective.
\begin{lemma}
\label{lem:coprodproj}
  If $P_i$ is projective for any $i\in I$, then if the coproducts $\sum_{i\in I}P_i$ exists, $\sum_{i\in I}P_i$ is also projective.
\end{lemma}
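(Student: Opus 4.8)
The plan is to use the universal property of the coproduct directly. Suppose we are given a surjection $q : A \surj B$ in the ambient category and a morphism $s : \sum_{i\in I}P_i \to B$. Precomposing $s$ with the coproduct injections $\iota_i : P_i \to \sum_{i\in I}P_i$ yields a family of morphisms $s_i = \iota_i \cps s : P_i \to B$. Since each $P_i$ is projective, for every $i \in I$ there exists a lift $l_i : P_i \to A$ with $l_i \cps q = s_i$.

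Next I would assemble these lifts into a single morphism out of the coproduct. By the universal property of $\sum_{i\in I}P_i$, the family $(l_i)_{i\in I}$ induces a unique morphism $l : \sum_{i\in I}P_i \to A$ satisfying $\iota_i \cps l = l_i$ for all $i$. It then remains to check that $l$ is indeed a lift of $s$ along $q$, i.e. that $l \cps q = s$. Both $l \cps q$ and $s$ are morphisms out of the coproduct, so by the uniqueness part of the universal property it suffices to verify that they agree after precomposing with each injection $\iota_i$. Indeed, $\iota_i \cps l \cps q = l_i \cps q = s_i = \iota_i \cps s$, so the two morphisms coincide. Hence $l$ witnesses the required lifting property and $\sum_{i\in I}P_i$ is projective.

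There is no real obstacle here: the argument is entirely formal and uses only that the coproduct with its injections satisfies the expected universal property, together with the definition of projectivity as the lifting property against epimorphisms. The only minor point worth noting is that we are implicitly using that precomposition with a fixed morphism is functorial (so that the equations chain correctly), and that coproducts of the $P_i$ are assumed to exist — which is precisely the hypothesis in the statement. In the concrete application to $[\Set,\Set]$ all small coproducts exist, so this hypothesis is automatically satisfied there; in particular, combined with Lemma~\ref{lem:repreproj}, this shows that every polynomial functor $\sum_{\star\in\Sigma}\yon^{\ar(\star)}$ is projective in $[\Set,\Set]$, and hence by Lemma~\ref{lem:projsuplp} that $\ms P$ applied to any polynomial functor is projective in $[\Set,\SupL]$.
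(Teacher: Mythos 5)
Your proof is correct and is essentially identical to the paper's own argument: both lift each component $s_i = \iota_i \cps s$ using projectivity of $P_i$ and then assemble the lifts via the universal property of the coproduct, verifying $l \cps q = s$ componentwise. The paper merely writes this more compactly using the copairing notation $[(l_i)_{i\in I}] \cps q = [(l_i \cps q)_{i\in I}] = [(f_i)_{i\in I}]$.
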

\begin{proof}
  Given an epimorphism $q : A \to B$, a morphism from $\sum_{i\in I}P_i \to B$ is of the form
  \[ [(f_i)_{i\in I}] : \sum_{i\in I}P_i \to B, \]
  with each $f_i : P_i \to B$. Now since $P_i$ is projective, there exists a lift $l_i : P_i \to A$ such that
  \[ l_i \cps q = f_i. \]
  Hence, we can construct the lift for $[(f_i)_{i\in I}]$ as $[(l_i)_{i\in I}]$, which implies
  \[ [(l_i)_{i\in I}] \cps q = [(l_i \cps q)_{i\in I}] = [(f_i)_{i\in I}]. \]
  Thus $\sum_{i\in I}P_i$ is also projective.  
\end{proof}

\begin{corollary}
\label{cor:polyproj}
  All polynomial functors in $[\Set,\Set]$ are projective.
\end{corollary}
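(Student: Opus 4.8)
The proof is a direct assembly of the three preceding lemmas together with two trivial observations. By definition (see Remark~\ref{rem:synpolymonad}), a polynomial functor $P$ in $[\Set,\Set]$ is one admitting a natural isomorphism $P \cong \sum_{i\in I}\yon^{X_i}$ for some family of sets $(X_i)_{i\in I}$. The plan is to first invoke Lemma~\ref{lem:repreproj} to conclude that each representable summand $\yon^{X_i}$ is projective. Next, since $\Set$ is cocomplete and colimits in the functor category $[\Set,\Set]$ are computed component-wise, the coproduct $\sum_{i\in I}\yon^{X_i}$ exists, so Lemma~\ref{lem:coprodproj} applies and gives that this coproduct is projective.

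It then remains only to transfer projectivity across the isomorphism $P \cong \sum_{i\in I}\yon^{X_i}$. I would note that projectivity is manifestly invariant under isomorphism: if $P \cong Q$ and $Q$ is projective, then given an epimorphism $q : A \surj B$ and a map $s : P \to B$, one precomposes with the isomorphism $Q \to P$, lifts the resulting map $Q \to B$ through $q$, and postcomposes the lift with $P \to Q$; alternatively, isomorphic objects have naturally isomorphic Hom-functors, and the Hom-functor characterisation of projectivity is therefore stable under isomorphism. Since $\sum_{i\in I}\yon^{X_i}$ is projective, so is $P$.

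There is essentially no obstacle here — the substantive content has already been isolated into Lemmas~\ref{lem:repreproj} and~\ref{lem:coprodproj} — and the only residual points are the existence of the coproduct, immediate from cocompleteness of $\Set$, and the isomorphism-invariance of projectivity, immediate from the definition. The write-up should be a single short paragraph combining these observations.
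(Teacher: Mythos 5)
Your proposal is correct and follows exactly the paper's route: combine Lemma~\ref{lem:repreproj} (representables are projective) with Lemma~\ref{lem:coprodproj} (coproducts of projectives are projective); the paper's proof is just this one-line assembly, and your extra remarks on existence of the coproduct and isomorphism-invariance of projectivity are harmless elaborations of points the paper leaves implicit.
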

\begin{proof}
  By definition, polynomial functors in $[\Set,\Set]$ are exactly coproducts of representable functors in $[\Set,\Set]$.
\end{proof}

Recall in Remark~\ref{rem:synpolymonad} we've mentioned that the syntactic monad $\mb F$ is a free monad on a polynomial functor $H$, which makes itself a polynomial monad. Explicitly, the polynomial functor $H$ is given by
\[ H = \sum_{\star\in\Sigma} \yon^{\ar(\star)}. \]
We can also see more directly that $\mb F$ is a polynomial functor, since by definition it can be described as the following coproduct
\[ \mb F \cong \yon + H + H \circ H + \cdots. \]
For any set $X$, $HX$ can be identified as the set of all terms of the form $\star(x_1,\cdots,x_n)$, with $\star\in\Sigma$ and $x_1,\cdots,x_n\in X$. Hence, the set of all formulas, or $\Sigma$-terms, $\mb FX$ is then naturally identified as follows
\[ \mb FX \cong X + HX + HHX + \cdots. \]
Since the full subcategory $\Poly$ of polynomial functors in $[\Set,\Set]$ is closed under products, coproducts and compositions (see \cite{spivak2021poly}), it follows that $\mb F$ is indeed a polynomial functor, and so does $\Eq$, or more generally the functor $\Seq_{n,m} \cong \mb F^n \times \mb F^m$ that encodes $(n,m)$-sequents of formulas. Hence, By Corollary~\ref{cor:polyproj}, $\mb F$, as well as $\Eq$ and $\Seq_{n,m}$, is projective in $[\Set,\Set]$; and further by Lemma~\ref{lem:projsuplp} it follows that $\ms P\mb F$ is also projective in $[\Set,\SupL]$. 


  

If $\mc P$ is a projective object in $[\Set,\SupL]$, then given any quotients $q_1 : \mc P \surj \mc Q$ and $q_2 : \mc A \surj \mc B$, if we have a morphism $\rho : \mc Q \to \mc B$, then by projectivity of $\mc P$ it follows that there must be a lifting of $q_1 \cps \rho$ along $q_2$, viz. a morphism $\tau$ making the following diagramme commute,
\[
\begin{tikzcd}
  \mc P \ar[d, two heads, "q_1"'] \ar[r, dashed, "\tau"] & \mc A \ar[d, two heads, "q_2"] \\
  \mc Q \ar[r, "\rho"'] & \mc B
\end{tikzcd}
\]
This implies that once we have a morphism $\rho$ from $\mc Q$ to $\mc B$, we are guaranteed to find a morphism $\tau : \mc P \to \mc A$, making the above diagramme a morphism in $\Quo$. Furthermore, if both quotients are quotients of projective objects, then we have the following corollary of equivalence of consequence relations.
\begin{corollary}
\label{cor:isoalgebraisation}
  For quotients $q_1 : \mc P_1 \surj \mc Q_1$, $q_2 : \mc P_2 \surj \mc Q_2$ with $\mc P_1,\mc P_2$ being projective in $[\Set,\SupL]$, they are equivalent as consequence relations if, and only if $\mc Q_1$ and $\mc Q_2$ are isomorphic.
\end{corollary}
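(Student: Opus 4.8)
The plan is to prove the biconditional by treating its two directions separately, with essentially all of the content in the converse. The forward direction requires no work: by our definition an equivalence of consequence relations is a tuple $(\tau,\delta,\rho)$ in which the component $\rho : \mc Q_1 \to \mc Q_2$ is required to be an isomorphism in $[\Set,\SupL]$, so if $\cons$ and $\conss$ are equivalent then $\mc Q_1 \cong \mc Q_2$ automatically; note this half uses nothing about projectivity.

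For the converse I would start from an isomorphism $\rho : \mc Q_1 \to \mc Q_2$ and build the two translations using the lifting property of projective objects against surjections that was recorded just before the statement, now applied in both directions. First, since $\mc P_1$ is projective and $q_2 : \mc P_2 \surj \mc Q_2$ is a surjection, the morphism $q_1 \cps \rho : \mc P_1 \to \mc Q_2$ lifts along $q_2$ to some $\tau : \mc P_1 \to \mc P_2$ with $\tau \cps q_2 = q_1 \cps \rho$; this commuting square says precisely that $(\tau,\rho)$ is a morphism in $\Quo$ from $q_1$ to $q_2$. Symmetrically, projectivity of $\mc P_2$ against the surjection $q_1 : \mc P_1 \surj \mc Q_1$ produces a lift $\delta : \mc P_2 \to \mc P_1$ of $q_2 \cps \rho\inv$ along $q_1$, so that $(\delta,\rho\inv)$ is a morphism in $\Quo$ from $q_2$ to $q_1$.

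The last step is to feed these into Lemma~\ref{lem:repinj}: a morphism $(\tau,\rho)$ in $\Quo$ exhibits a faithful representation exactly when $\rho$ is injective, and here $\rho$ (being an isomorphism) and $\rho\inv$ are both injective. Hence $(\tau,\rho)$ represents $\cons$ by $\conss$ and $(\delta,\rho\inv)$ represents $\conss$ by $\cons$, which is exactly what the definition of an equivalence of consequence relations demands, so $(\tau,\delta,\rho)$ witnesses that $\cons$ and $\conss$ are equivalent. The only thing to be careful about — the "hard part", such as it is — is purely bookkeeping: keeping track of the composition convention $\cps$ and making sure each of the two lifts is taken against the correct quotient map so that the resulting squares line up as $\Quo$-morphisms pointing in opposite directions; once that is arranged, Lemma~\ref{lem:repinj} supplies the faithfulness for free and no component-wise computation is needed, and in particular the round-trip identities appearing in Lemma~\ref{lem:equivcons} need not be checked by hand.
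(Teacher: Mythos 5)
Your proof is correct and follows essentially the same route as the paper's: the forward direction is immediate from the definition of equivalence, and the converse uses projectivity of $\mc P_1$ and $\mc P_2$ to lift $q_1 \cps \rho$ and $q_2 \cps \rho\inv$ to the translations $\tau$ and $\delta$. The only difference is cosmetic — where the paper concludes ``by definition,'' you make explicit the appeal to Lemma~\ref{lem:repinj} to get faithfulness of both representations from injectivity of $\rho$ and $\rho\inv$, which is a reasonable unpacking of the same argument.
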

\begin{proof}
  The only if direction is easy. Suppose $\mc Q_1$ and $\mc Q_2$ are isomorphic with the isomorphism $\rho : \mc Q_1 \cong \mc Q_2$, then by projectivity of $\mc P_1,\mc P_2$, there exists morphisms $\tau : \mc P_1 \gb \mc P_2 : \delta$, such that
  \[ \tau \cps q_2 = q_1 \cps \rho, \quad \delta \cps q_1 = q_2 \cps \rho\inv. \]
  By definition, it follows that the two consequence relations are equivalent.
\end{proof}

\bibliography{mybib}{}

\begin{thebibliography}{}

\bibitem[Blok and J{\'o}nsson, 2006]{blok2006equivalence}
Blok, W.~J. and J{\'o}nsson, B. (2006).
\newblock Equivalence of consequence operations.
\newblock {\em Studia Logica}, 83(1):91--110.

\bibitem[Blok and Pigozzi, 1989]{blok1989algebraizable}
Blok, W.~J. and Pigozzi, D. (1989).
\newblock {\em Algebraizable logics}, volume~77.
\newblock American Mathematical Soc.

\bibitem[Borceux, 1994]{borceux1994handbook1}
Borceux, F. (1994).
\newblock {\em Handbook of categorical algebra: volume 1, Basic category
  theory}, volume~1.
\newblock Cambridge University Press.

\bibitem[Borceux, 2005]{borceux2005internalaction}
Borceux, Francis, J. G. Z. K. G.~M. (2005).
\newblock Internal object actions.
\newblock {\em Commentationes Mathematicae Universitatis Carolinae},
  46(2):235--255.

\bibitem[Galatos and Tsinakis, 2009]{galatos&tsinakis2009consequence}
Galatos, N. and Tsinakis, C. (2009).
\newblock Equivalence of consequence relations: an order-theoretic and
  categorical perspective.
\newblock {\em The Journal of Symbolic Logic}, 74(3):780--810.

\bibitem[Gambino and Kock, 2013]{gambino2013polynomial}
Gambino, N. and Kock, J. (2013).
\newblock Polynomial functors and polynomial monads.
\newblock In {\em Mathematical proceedings of the cambridge philosophical
  society}, volume 154, pages 153--192. Cambridge University Press.

\bibitem[Halbach and Leigh, 2021]{halbachleigh2021}
Halbach, V. and Leigh, G. (2021).
\newblock The road to paradox: A guide to syntax, truth, and modality.
\newblock To be published.

\bibitem[Hyland and Power, 2007]{hyland&poewr2007algandmonad}
Hyland, M. and Power, J. (2007).
\newblock The category theoretic understanding of universal algebra: Lawvere
  theories and monads.
\newblock {\em Electronic Notes in Theoretical Computer Science}, 172:437--458.

\bibitem[Johnstone, 1975]{johnstone1975adjointlifting}
Johnstone, P.~T. (1975).
\newblock Adjoint lifting theorems for categories of algebras.
\newblock {\em Bulletin of the London Mathematical Society}, 7(3):294--297.

\bibitem[Johnstone, 1982]{johnstone1982stone}
Johnstone, P.~T. (1982).
\newblock {\em Stone spaces}, volume~3.
\newblock Cambridge university press.

\bibitem[Joyal and Tierney, 1984]{joyal1984extension}
Joyal, A. and Tierney, M. (1984).
\newblock {\em An extension of the Galois theory of Grothendieck}, volume 309.
\newblock American Mathematical Soc.

\bibitem[Kelly, 1980]{kelly1980unifiedfree}
Kelly, G.~M. (1980).
\newblock A unified treatment of transfinite constructions for free algebras,
  free monoids, colimits, associated sheaves, and so on.
\newblock {\em Bulletin of the Australian Mathematical Society}, 22(1):1--83.

\bibitem[Mac~Lane, 2013]{maclane2013categories}
Mac~Lane, S. (2013).
\newblock {\em Categories for the working mathematician}, volume~5.
\newblock Springer Science \& Business Media.

\bibitem[Manes, 2003]{manes2003monads}
Manes, E. (2003).
\newblock Monads of sets.
\newblock In {\em Handbook of algebra}, volume~3, pages 67--153. Elsevier.

\bibitem[Spivak and Nelson, 2021]{spivak2021poly}
Spivak, D. and Nelson, N. (2021).
\newblock {\em Polynomial Functors: A General Theory of Interaction}.
\newblock Topos Institute.
\newblock Sept. 8th version.

\bibitem[Tarski, 1928]{tarski1928remarques}
Tarski, A. (1928).
\newblock Remarques sur les notions fondamentales de la m{\'e}thodologie des
  math{\'e}matiques.
\newblock In {\em Annales de la Soci{\'e}t{\'e} Polonaise de Math{\'e}matique},
  volume~7, pages 270--272.

\end{thebibliography}
\bibliographystyle{apalike}

\end{document}